\newtheorem{definition}{Definition}
\newtheorem{lemma}{Lemma}
\newtheorem{proposition}{Proposition}
\newtheorem{theorem}{Theorem}
\newtheorem{remark}{Remark}
\newtheorem{example}{Example}
\def\NN{\mathbb{N}}
\def\QQ{\mathbb{Q}}
\def\ZZ{\mathbb{Z}}
\title{On p--adic Multidimensional Continued Fractions}
\author{Nadir Murru and Lea Terracini \\
Department of Mathematics G. Peano, University of Torino\\
Via Carlo Alberto 10, 10123, Torino, ITALY\\
nadir.murru@unito.it, lea.terracini@unito.it}
\date{}
\begin{document}
\maketitle

\begin{abstract}
Multidimensional continued fractions (MCFs) were introduced by Jacobi and Perron in order to generalize the classical continued fractions. In this paper, we propose an introductive fundamental study about MCFs in the field of the $p$--adic numbers  $\mathbb Q_p$. First, we introduce them from a formal point of view, i.e., without considering a specific algorithm that produces the partial quotients of a MCF, and we perform a general study about their convergence in $\mathbb Q_p$. In particular, we derive some conditions about their convergence and we prove that convergent MCFs always strongly converge in $\mathbb Q_p$ contrarily to the real case where strong convergence is not ever guaranteed. Then, we focus on a specific algorithm that, starting from a $m$--tuple of numbers in $\mathbb Q_p$, produces the partial quotients of the corresponding MCF. We see that this algorithm is derived from a generalized $p$--adic Euclidean algorithm and we prove that it always terminates in a finite number of steps when it processes rational numbers.
\end{abstract}

\noindent \textbf{Keywords:} continued fractions, Jacobi--Perron algorithm, multidimensional continued fractions, p--adic numbers \\
\textbf{2010 Mathematics Subject Classification:} 11J70, 12J25, 11J61.

\section{Introduction}
Continued fractions are very classical objects in number theory. In the general case, a continued fraction is an expression of the form
\begin{equation} \label{eq:gen-cf} a_0 + \cfrac{b_1}{a_1 + \cfrac{b_2}{a_2 + \ddots}} \end{equation}
where the $a_n$'s and $b_n$'s are complex numbers or, more generally, belong to some topological field. When $b_n = 1$ and $a_n$'s are integer numbers, for any $n$, the previous expression is called simple continued fraction and is usually denoted by $[a_0, a_1, a_2, \dots]$. This kind of continued fractions has been extensively studied for their useful and beautiful properties. Indeed, they yield to a representation for any real number $\alpha$ where the $a_n$'s, called partial quotients, can be obtained by iterating the following algorithm:
$$\begin{cases} a_n = [\alpha_n] \cr \alpha_{n + 1} = \cfrac{1}{\alpha_k - a_k} \quad \text{if $\alpha_k$ is not integer}  \end{cases}, k = 0, 1, 2, ...,$$ 
starting with $\alpha_0 = \alpha$. The expansion provided by this algorithm is finite if and only if $\alpha$ is a rational number. Moreover, the expansion in  simple continued fraction of a real number is periodic if and only if the starting number is a quadratic irrationality. This beautiful property motivated Hermite \cite{Her} to ask to Jacobi for finding an algorithm that becomes eventually periodic when processes other kinds of algebraic irrationals. Jacobi \cite{Jac} proposed an algorithm for working with cubic irrationals and Perron \cite{Per} generalized it in any dimension. The Jacobi--Perron algorithm processes a $m$--tuple of real numbers $(\alpha_0^{(1)}, ..., \alpha_0^{(m)})$ and provides a $m$--tuple of integer sequences $(a_n^{(1)})_{n=0}^\infty, ..., (a_n^{(m)})_{n=0}^\infty$ by means of the following equations:

\begin{equation} \label{eq:alg-jp}
\begin{cases} a_n^{(i)} = [\alpha_n^{(i)}], \quad i = 1, ..., m, \cr
\alpha_{n+1}^{(1)} = \cfrac{1}{\alpha_n^{(m)} - a_n^{(m)}}, \cr
\alpha_{n+1}^{(i)} = \cfrac{\alpha_n^{(i-1)} - a_n^{(i-1)}}{\alpha_n^{(m)} - a_n^{(m)}}, \quad i = 2, ..., m, \end{cases} n = 0, 1, 2, ...
\end{equation}

In this way, a $m$--tuple of real numbers is represented by a $m$--tuple of integer sequences (the partial quotients) by means of objects that generalize the classical continued fractions and that we call multidimensional continued fractions. Indeed, the following relations hold:
$$\begin{cases} \alpha_n^{(i-1)} = a_n^{(i-1)} + \cfrac{\alpha_{n+1}^{(i)}}{\alpha_{n+1}^{(1)}}, \quad i = 2, ..., m \cr
\alpha_n^{(m)} = a_n^{(m)} + \cfrac{1}{\alpha_{n+1}^{(i)}} \end{cases}, n = 0, 1, 2, ...$$
The problem regarding the periodicity of multidimensional continued fractions is still open, i.e., it is not known if given an algebraic irrational of degree $m + 1$ there exist $m - 1$ real numbers such that the Jacobi--Perron algorithm is eventually periodic. Multidimensional continued fractions have been extensively studied during the years \cite{Ber}, \cite{Das}, \cite{Gar}, \cite{Ger}, \cite{Hen}, \cite{Kar}, \cite{Mur}, \cite{Raj}, \cite{Schw}, \cite{Tam}.

Classical continued fractions have been also studied in the field of the $p$--adic numbers $\mathbb Q_p$ aiming at obtaining results analogous to those holding in the real field (regarding in particular finiteness, periodicity and approximation). However, until now, there is not a standard definition of continued fractions in $\mathbb Q_p$, since a straightforward generalization does not exist. Schneider \cite{Sch} focused on general continued fractions of the form \eqref{eq:gen-cf}, where $b_n$'s are certain powers of the prime number $p$ and any $a_n \in \{1, 2, ..., p-1\}$. Ruban \cite{Rub} modified the Schneider's approach in order to work with simple continued fractions. Browkin \cite{Bro1} proposed an algorithm that produces simple continued fractions whose partial quotients belong to $\mathbb Z_p [\frac{1}{p}] \cap \left( -\frac{p}{2}, \frac{p}{2} \right)$. For an extensive survey on the subject see \cite{Mil}.

Schneider's continued fractions provide a finite or periodic expansion for rational numbers (see, e.g, \cite{Poo}). Moreover they are not always periodic for square roots of rational integers \cite{Til}, \cite{Weg}. 

Similarly, negative rational numbers have periodic expansion as Ruban's continued fractions and positive rational numbers have a finite expansion \cite{Lao}, \cite{Wan}. Recently, Capuano et al. \cite{Cap} have given a criterion for periodicity of the expansion of quadratic irrationalities in Ruban's continued fractions. Moreover, some studies about transcendence criteria for Ruban's continued fractions can be found in \cite{Oot}, \cite{Ubo}, \cite{Wan}.

Browkin's continued fractions provide a finite expansion for all rational numbers. However, also in this case, they do not provide a periodic expansion for all quadratic irrationality. In \cite{Bro2}, Browkin provides some new algorithms aiming at characterizing all square roots of rational integers with a periodic expansion, without obtaining a definitive result. Other studies regarding periodicity of Browkin's continued fractions can be found in \cite{Bed1}, \cite{Bed2}.

Further results about continued fractions in the $p$--adic field can be found in \cite{Han}, \cite{Koj}, \cite{Sai}.

Considering the many studies about multidimensional continued fractions over the real numbers, it seems interesting to study what happens in the $p$--adic field. At the best of our knowledge, the only works about $p$--adic multidimensional continued fractions are \cite{Sai2} and \cite{Tam2}. However, in these papers the authors only focus on the proposal of new algorithms with the aim of obtaining a multidimensional $p$--adic version of the Lagrange theorem. Thus, an in--depth investigation and a comprehensive presentation of this topic are still lacking.

In this paper, we would like to overcome this lack providing a fundamental study of $p$--adic multidimensional continued fractions. First of all, we present multidimensional continued fractions from a formal point of view in section \ref{sec:formal}, in order to perform a general study about their convergence in $\mathbb Q_p$. In section \ref{sec:conv}, we provide a sufficient condition for the convergence that does not depend on a specific algorithm. Moreover, we give some results useful for the construction of algorithms that generate convergent multidimensional continued fractions in $\mathbb Q_p$. In section \ref{sec:alg}, we focus on a specific algorithm, that we will call $p$--adic Jacobi--Perron algorithm and generalize the Browkin approach to the multidimensional case. In particular, this algorithm results to be equivalent to a generalized $p$--adic Euclidean algorithm. Moreover, we prove that if it terminates in a finite number of steps, then the processed input consists of elements in $\mathbb Q_p$ that are $\mathbb Q$--linearly dependent. On the other hand, we will prove that if the input consists of rational numbers, then the $p$--adic Jacobi--Perron algorithm terminates in a finite number of steps. Finally, we present some examples of the application of the $p$--adic Jacobi--Perron algorithm. This paper lays the basis for a more extensive study on $p$--adic multidimensional continued fractions. In particular, the following topics, which will be studied in a forthcoming paper, seem to be of relevant interest: a complete characterization of the inputs for which the algorithm stops; a study about periodicity related to algebraic irrationalities; an analysis of the quality of approximations of $p$--adic numbers.

\section{Multidimensional continued fractions from a formal point of view} \label{sec:formal}

In this section, we would like to give an original presentation of multidimensional continued fractions. Indeed, they are usually introduced starting from an algorithm, like, e.g., the Jacobi--Perron algorithm \eqref{eq:alg-jp}, working on real numbers. Here, we give all the possible definitions considering the partial quotients as formal quantities, which are not necessarily obtained by the application of an algorithm. Moreover, we also consider a more general case where the numerators of a multidimensional continued fraction are not necessarily equal to 1, but they are a general sequence, like the case of classical continued fractions in the form \eqref{eq:gen-cf}.

In the following we work in a field $\mathbb K$ and we fix an integer $m\geq 1$ (which will represent the dimension of the multidimensional continued fraction). Let us consider $m + 1$ sequences of indeterminates, whose elements are called \emph{partial quotients},
\begin{equation} \label{eq:pq}
(a^{(i)}_n)_{n\in\NN}=(a^{(i)}_0, a^{(i)}_1, a^{(i)}_2, \ldots , a^{(i)}_n, \ldots),\quad 
i=1, \ldots, m+1,
\end{equation}
with $a^{(m+1)}_0=1$, where these sequences are all infinite or all finite with the same length. We also consider $m + 1$ sequences of indeterminates, called \emph{complete quotients},

\begin{equation} \label{eq:cq}
(\alpha^{(i)}_n)_{n\in\NN}=(\alpha^{(i)}_0, \alpha^{(i)}_1, \alpha^{(i)}_2, \ldots , \alpha^{(i)}_n, \ldots),\quad 
i=1, \ldots, m+1.
\end{equation}
Partial quotients and complete quotients will be subjected to the relations
\begin{equation} \label{eq:mcf}
\alpha_n^{(i)} = a_n^{(i)} + \cfrac{\alpha_{n + 1}^{(i + 1)}}{\alpha_{n + 1}^{(1)}}, \quad i = 1, \ldots, m, 
\end{equation}
\begin{equation} \label{eq:mcf2}
\alpha_n^{(m + 1)} = a_n^{(m + 1)}
\end{equation}
for any $n \geq 0$. This defines the \emph{multidimensional continued fraction} (MCF) of dimension $m$.

\begin{example}
Fixing $m = 2$ and three sequences $(a_0^{(1)}, a_1^{(1)}, ...)$, $(a_0^{(2)}, a_1^{(2)}, ...)$, $(a_0^{(3)}, a_1^{(3)}, ...)$, equations \eqref{eq:mcf} and \eqref{eq:mcf2} determine the MCF of dimension 2 that has the following expansion:

\begin{equation*} 
\alpha_0^{(1)}=a_0^{(1)}+\cfrac{a_1^{(2)}+\cfrac{a_2^{(3)}}{a_2^{(1)}+\cfrac{a_3^{(2)}+\cfrac{a_4^{(3)}}{\ddots}}{a_3^{(1)}+\cfrac{\ddots}{\ddots}}}}{a_1^{(1)}+\cfrac{a_2^{(2)}+\cfrac{a_3^{(3)}}{a_3^{(1)}+\cfrac{\ddots}{\ddots}}}{a_2^{(1)}+\cfrac{a_3^{(2)}+\cfrac{a_4^{(3)}}{\ddots}}{a_3^{(1)}+\cfrac{\ddots}{\ddots}}}} \quad \text{and} \quad \alpha_0^{(2)}=a^{(2)}_0+\cfrac{a_1^{(3)}}{a_1^{(1)}+\cfrac{a_2^{(2)}+\cfrac{a_3^{(3)}}{a_3^{(1)}+\cfrac{\ddots}{\ddots}}}{a_2^{(1)}+\cfrac{a_3^{(2)}+\cfrac{a_4^{(3)}}{\ddots}}{a_3^{(1)}+\cfrac{\ddots}{\ddots}}}}.
\end{equation*}
\end{example}

\begin{remark}
Following this approach, a MCF can be viewed as an object in the polynomial ring \\ $\mathbb K[a_n^{(i)}, \alpha_n^{(i)} / i = 1, \ldots, m+1, n \in \mathbb N] / I$, $n \in \NN$, where $I$ is the ideal generated by the polynomials $\alpha_n^{(m+1)} - a_n^{(m+1)}$ and $\alpha_{n+1}^{(1)}(\alpha_n^{(i)} - a_n^{(i)}) - \alpha_{n+1}^{(i+1)}$.
\end{remark}

\begin{remark}
If the sequences of partial quotients \eqref{eq:pq} are finite, they must have the same length $r$ and in this case also the sequences of complete quotients \eqref{eq:cq} are finite and $\alpha_r^{(i)} = a_r^{(i)}$, for any $i = 1, \ldots, m$.
\end{remark}

From equations \eqref{eq:mcf} and \eqref{eq:mcf2}, the following equalities hold true:
\begin{equation} \label{eq:alg-mcf}
\begin{cases}
\alpha_{n + 1}^{(1)} = \cfrac{a_{n + 1}^{(m+1)}}{\alpha_n^{(m)} - a_n^{(m)}} \cr \cr
\alpha_{n + 1}^{(i)} = a_{n + 1}^{(m+1)} \cdot \cfrac{(\alpha_n^{(i - 1)} - a_n^{(i - 1)})}{\alpha_n^{(m)} - a_n^{(m)}} = \alpha_{n + 1}^{(1)}(\alpha_n^{(i - 1)} - a_n^{(i - 1)}), \quad i = 2, \ldots, m
\end{cases}
\end{equation}
for any $n \geq 0$.

Now, we can define the $n$--\emph{th convergents} of a multidimensional continued fraction as
$$
Q^{(i)}_n=\frac {A^{(i)}_{n}}{A^{(m+1)}_{n}},
$$
for $i=1,\ldots, m$ and $n\in\NN$, where
\begin{equation*} 
A^{(i)}_{-j} = \delta_{ij}, \quad A^{(i)}_{0} = a^{(i)}_0, \quad A^{(i)}_{n} = \sum_{j=1}^{m+1}a^{(j)}_{n}A^{(i)}_{n-j}
\end{equation*}
for $i = 1, \ldots, m + 1$, $j = 1, \ldots, m$ and any $n \geq 1$, where $\delta_{ij}$ is the Kronecker delta. It can be proved by induction that for every $n \geq 1$ and $i = 1, \ldots, m$, we have
\begin{equation} \label{eq:alpha0}
\alpha_0^{(i)}=\frac {\alpha_n^{(1)}A^{(i)}_{n-1}+ \alpha_n^{(2)}A^{(i)}_{n-2}+\ldots +\alpha_n^{(m+1)}A^{(i)}_{n-m-1} }{\alpha_n^{(1)}A^{(m+1)}_{n-1}+ \alpha_n^{(2)}A^{(m+1)}_{n-2}+\ldots +\alpha_n^{(m+1)}A^{(m+1)}_{n-m-1} }
\end{equation}
Let us note that numerators and denominators of the convergents can be computed by the products of the following matrices
\begin{eqnarray*}
\mathcal{A}_n &=& \begin{pmatrix} a_n^{(1)} &1 &0&\ldots &0\\
a_n^{(2)} &0 &1&\ldots &0\\
\vdots& \vdots& \vdots& \vdots& \vdots\\
a_n^{(m)} &0 &0&\ldots &1\\  a_n^{(m+1)} &0 &0&\ldots &0\end{pmatrix}, \quad \hbox{ for } n\geq 0.
\end{eqnarray*}
Indeed, if we define
$$
\mathcal{B}_n = \mathcal{B}_{n - 1} \mathcal{A}_n = \mathcal{A}_0 \mathcal{A}_1 \cdots \mathcal{A}_n, \quad \hbox{ for } n\geq 1,
$$
we have that
\begin{eqnarray*}
\mathcal{B}_n &= &\begin{pmatrix} {A^{(1)}_{n}} &{A^{(1)}_{n-1}} &\ldots & {A^{(1)}_{n-m-1}}\\
{A^{(2)}_{n}} &{A^{(2)}_{n-1}} &\ldots & {A^{(2)}_{n-m-1}}\\
\vdots &\vdots&\vdots& \vdots\\
{A^{(m+1)}_{n}} &{A^{(m+1)}_{n-1}} &\ldots & {A^{(m+1)}_{n-m-1}}\end{pmatrix} \quad\hbox{ for } n\geq 1.
\end{eqnarray*}
Notice that
\begin{equation*}
\det(\mathcal{A}_n) = (-1)^{m+1}a_n^{(m+1)}
\end{equation*}  
\begin{equation*}
\det(\mathcal{B}_n) = \prod_{j=0}^n (-1)^{m+1}a_j^{(m+1)} =  (-1)^{(n+1)(m+1)}\prod_{j=0}^na_j^{(m+1)}.
\end{equation*}   

\section{Convergence of multidimensional $p$-adic continued fractions} \label{sec:conv}

In this section we will focus on $\mathbb K = \mathbb Q_p$ the field of the $p$--adic numbers and we study here the convergence of a multidimensional continued fraction. Specifically, we give a sufficient condition concerning the partial quotients in order that the MCF converges in $\mathbb Q_p$. Moreover, we will see that under the same condition, we also have strong convergence against the real case where strong convergence of a MCF is not always guaranteed.

In the following, $\lvert \cdot \rvert$ will always denote the $p$--adic norm. Now, let us consider the following conditions on the partial quotients of a MCF:
\begin{equation}\label{eq:cond-conv}
\begin{cases}
\lvert a_n^{(1)} \rvert \geq 1 \cr
\lvert a_n^{(i)} \rvert < \lvert a_n^{(1)} \rvert, \quad i = 2, \ldots, m+1
\end{cases}
\end{equation}
for any $n \geq 1$.

\begin{lemma}\label{lem:norme} Under the conditions \eqref{eq:cond-conv}, we have
\begin{equation*}   |A_n^{(m+1)}|= \prod_{h=1}^n |a^{(1)}_h|.\end{equation*}
for any $n\geq 1$.
\end{lemma}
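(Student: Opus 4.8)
The plan is to argue by strong induction on $n$, using the defining linear recurrence $A^{(m+1)}_n = \sum_{j=1}^{m+1} a^{(j)}_n A^{(m+1)}_{n-j}$ together with the ultrametric (strong triangle) inequality of the $p$-adic norm. First I would record the initial data for the sequence $A^{(m+1)}_\bullet$: from $A^{(i)}_{-j}=\delta_{ij}$ with $j=1,\dots,m$ we get $A^{(m+1)}_{-1}=\cdots=A^{(m+1)}_{-m}=0$ (row $m+1$ is never selected by the Kronecker delta), while $A^{(m+1)}_0=a^{(m+1)}_0=1$. The base case $n=1$ is then immediate: all negative-index terms vanish, so $A^{(m+1)}_1=a^{(1)}_1 A^{(m+1)}_0=a^{(1)}_1$, giving $|A^{(m+1)}_1|=|a^{(1)}_1|$ as claimed.

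For the inductive step I would assume $|A^{(m+1)}_k|=\prod_{h=1}^k|a^{(1)}_h|$ for every $0\le k<n$ (with the empty product equal to $1$ when $k=0$), and show that in the sum defining $A^{(m+1)}_n$ the single term $j=1$ strictly dominates all the others in $p$-adic norm. The leading term has norm $|a^{(1)}_n A^{(m+1)}_{n-1}| = |a^{(1)}_n|\prod_{h=1}^{n-1}|a^{(1)}_h| = \prod_{h=1}^n|a^{(1)}_h|$, which is nonzero since each $|a^{(1)}_h|\ge 1$. For $j\ge 2$, whenever $n-j\ge 0$ the induction hypothesis together with $|a^{(j)}_n|<|a^{(1)}_n|$ gives $|a^{(j)}_n A^{(m+1)}_{n-j}| = |a^{(j)}_n|\prod_{h=1}^{n-j}|a^{(1)}_h| < |a^{(1)}_n|\prod_{h=1}^{n-j}|a^{(1)}_h|$; and when $n-j<0$ the term is simply $0$. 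Since the term of maximal norm is then unique, the ultrametric inequality upgrades to the equality $|A^{(m+1)}_n| = |a^{(1)}_n A^{(m+1)}_{n-1}| = \prod_{h=1}^n|a^{(1)}_h|$, closing the induction.

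The one point that requires care — and where the two hypotheses play genuinely different roles — is the comparison $|a^{(1)}_n|\prod_{h=1}^{n-j}|a^{(1)}_h| \le \prod_{h=1}^n|a^{(1)}_h|$ for $j\ge 2$. This reduces to showing $\prod_{h=n-j+1}^{n-1}|a^{(1)}_h|\ge 1$, which is exactly where $|a^{(1)}_h|\ge 1$ is used, while the strict inequality $|a^{(j)}_n|<|a^{(1)}_n|$ is what makes the leading term the strict maximum. I would also handle the boundary cases $n-j\le 0$ (vanishing terms for negative indices, empty product for $n-j=0$) so that the dominance argument covers the first few values of $n$ uniformly. No deep obstacle is expected: the crux is simply organizing the ultrametric estimate so that both conditions in \eqref{eq:cond-conv} are invoked in their proper places.
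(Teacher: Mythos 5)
Your proof is correct and takes essentially the same route as the paper's: induction on $n$, using \eqref{eq:cond-conv} to show that the $j=1$ term in the recurrence strictly dominates all others in $p$-adic norm, so the ultrametric inequality forces $|A^{(m+1)}_n|$ to equal the norm of that leading term. The only difference is that you make the boundary data ($A^{(m+1)}_{-1}=\cdots=A^{(m+1)}_{-m}=0$, $A^{(m+1)}_0=1$) and the vanishing of terms with negative index explicit, which the paper leaves implicit.
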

\begin{proof}  
We prove the lemma by induction on $n$. For $n=1$, $A^{(m+1)}_1=a_1^{(1)}$ and the result holds true. Now, let us consider $A^{(m+1)}_{n+1}$, we have
\begin{equation} \label{eq:inductivestep} 
|A^{(m+1)}_{n+1}|=|a_{n+1}^{(1)} A_n^{(m+1)} +\sum_{j=2}^{m+1} a_{n+1}^{(j)} A^{(m+1)}_{n+1-j}|.
\end{equation}
By inductive hypothesis $|a_{n+1}^{(1)} A_n^{(m+1)}|= \prod_{h=1}^{n+1} |a_h^{(1)}|$ and by \eqref{eq:cond-conv}, $|a_{n+1}^{(j)}|< |a_{n+1}^{(1)}|$ for $j\geq 2$. Therefore, for $j\geq 2$, we have
$$|a_{n+1}^{(j)}A_{n+1-j}^{(m+1)}|< |a_{n+1}^{(1)}A_{n+1-j}^{(m+1)}|=|a_{n+1}^{(1)} \prod_{h=1}^{n+1-j}a_h^{(1)}|,$$
and the latter is $\leq \prod_{h=1}^{n+1}| a_h^{(1)}|$ by \eqref{eq:cond-conv}. It follows that the right side norm   in (\ref{eq:inductivestep}) is equal to the norm of its first summand and the result follows. 
\end{proof}

\begin{lemma}\label{prop:generalfact}
Let $(z_n)_{n\in\NN}$ be a sequence of natural numbers such that there exists $m\geq 1$ for which
$$z_{n+m+1}>\min\{z_{n+1},\ldots, z_{n+m}\}\hbox{ for } n\gg 0,$$
then $z_n\to \infty$ for $n\to\infty$.
\end{lemma}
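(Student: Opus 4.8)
The plan is to control $(z_n)$ through the \emph{sliding minima} $M_n := \min\{z_{n+1},\ldots,z_{n+m}\}$ taken over windows of $m$ consecutive terms, to prove that $M_n\to\infty$, and then to transfer this to $(z_n)$ itself. Let $N_0$ be a threshold beyond which the hypothesis holds, i.e.\ $z_{n+m+1}>M_n$ for all $n\geq N_0$.

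First I would show that $(M_n)_{n\geq N_0}$ is non-decreasing. The window defining $M_{n+1}$ is $\{z_{n+2},\ldots,z_{n+m+1}\}$, obtained from the window of $M_n$ by dropping $z_{n+1}$ and appending $z_{n+m+1}$. Every retained term $z_{n+2},\ldots,z_{n+m}$ already lies in the window of $M_n$, so each is $\geq M_n$, while the new term satisfies $z_{n+m+1}>M_n$ by hypothesis. Hence every element of the new window is $\geq M_n$, giving $M_{n+1}\geq M_n$. Note this argument is uniform in $m\geq 1$ and needs no separate treatment of the degenerate case $m=1$.

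Next I would argue that $M_n\to\infty$. Since $(M_n)_{n\geq N_0}$ is a non-decreasing sequence of natural numbers, it is either unbounded---and hence $M_n\to\infty$---or eventually constant. To rule out the latter, suppose $M_n=c$ for all $n\geq N_1\geq N_0$. The hypothesis then forces $z_{n+m+1}>c$ for every $n\geq N_1$, that is $z_k\geq c+1$ for all $k\geq N_1+m+1$. But then every window $\{z_{n+1},\ldots,z_{n+m}\}$ with $n+1\geq N_1+m+1$ consists entirely of terms $\geq c+1$, forcing $M_n\geq c+1$ for all large $n$ and contradicting $M_n=c$. Therefore $M_n\to\infty$.

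Finally, I would transfer this to $(z_n)$: since $z_k$ is the first entry of the window defining $M_{k-1}=\min\{z_k,\ldots,z_{k-1+m}\}$, we have $z_k\geq M_{k-1}$ for every $k$, and as $M_{k-1}\to\infty$ this yields $z_k\to\infty$. The only genuinely delicate point is the identification of the right quantity to monitor---the windowed minimum, which serves as a monovariant---together with the careful index bookkeeping when passing between the windows of $M_n$ and $M_{n+1}$; once the monotonicity of $(M_n)$ is established, the remaining conclusions are forced by the discreteness of $\NN$.
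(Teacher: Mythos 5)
Your proof is correct. The overall strategy coincides with the paper's: both arguments monitor the sliding window minimum $M_n$, establish that it is non-decreasing by exactly the comparison you give (retained terms are $\geq M_n$, the appended term is $>M_n$), and then transfer $M_n\to\infty$ to $z_n\to\infty$ via $z_k\geq M_{k-1}$. Where you genuinely diverge is in the key step that $M_n\to\infty$: the paper tracks the position at which the minimum of the window is attained and shows by an explicit chain of equalities that the minimum must strictly increase within $m+1$ steps (i.e.\ $M_k<M_{k-i+m+1}$), which requires some careful index bookkeeping; you instead invoke the dichotomy that a non-decreasing sequence of natural numbers is either unbounded or eventually constant, and rule out eventual constancy by noting that $M_n\equiv c$ would force all sufficiently late $z_k$ to be $\geq c+1$ and hence $M_n\geq c+1$, a contradiction. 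Your route is cleaner and less error-prone; the paper's route has the minor advantage of yielding a quantitative statement (a strict increase of the minimum at least once every $m+1$ steps), which could be turned into an explicit lower bound $M_n\gtrsim n/(m+1)$ if a rate were ever needed. Both arguments are complete and correct.
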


\begin{proof}
We put $M_k=\min\{z_{k-1},\ldots, z_{k-m}\}$, for $k\gg 0$, then $z_k > M_k$. Notice that $M_{k+1}\geq M_k$, in fact
\begin{itemize}
\item if $\min\{z_{k-1},\ldots, z_{k-m}\}=\min\{z_{k-1},\ldots, z_{k-m+1}\}$, then $M_k=M_{k+1}$;
\item otherwise  $\min\{z_{k-1},\ldots, z_{k-m}\}<\min\{z_{k-1},\ldots, z_{k-m+1}\}$ so that $\min\{z_{k},\ldots, z_{k-m+1}\}>z_{k-m}$ and $M_{k+1}>M_k$.
\end{itemize}
Assume that $\min\{z_{k-1},\ldots, z_{k-m}\}=z_{k-i}$ and $z_{k-j}>z_{k-i}$ for $j=1,\ldots, i-1$. Then
\begin{eqnarray*} z_{k-i}=\min\{z_{k-1},\ldots, z_{k-m}\}\\ =\min\{z_{k},\ldots, z_{k-m+1}\}\\ \vdots \\
=\min\{z_{k-i+m-1},\ldots, z_{k-i}\}\\
<\min\{z_{k-i+m},\ldots, z_{k-i+1}\}.
\end{eqnarray*}
It follows that $M_{k}<M_{k-i+m+1}$, so that for $n\to \infty$, $M_n\to\infty$ and $z_n\to\infty$.
\end{proof}

\begin{theorem}\label{teo:convergence} Under the conditions \eqref{eq:cond-conv}, the sequences $(Q_n^{(i)})_{n\in\NN}$ converge for $i=1,\ldots, m$.
\end{theorem}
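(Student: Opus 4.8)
The plan is to exploit the completeness of $\QQ_p$ together with the ultrametric Cauchy criterion: in a non-archimedean complete field a sequence converges as soon as the distance between consecutive terms tends to $0$. Hence, fixing $i\in\{1,\ldots,m\}$ and writing $d_n=Q_n^{(i)}-Q_{n-1}^{(i)}$, it suffices to prove $|d_n|\to 0$. I note first that by Lemma~\ref{lem:norme} we have $|A_n^{(m+1)}|=\prod_{h=1}^n|a_h^{(1)}|\geq 1>0$, so every $A_n^{(m+1)}$ is nonzero and each convergent $Q_n^{(i)}$ is well defined.

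The core of the argument is a recurrence for $d_n$. First I would write any difference of convergents as a $2\times2$ minor, $Q_a^{(i)}-Q_b^{(i)}=M_{a,b}/(A_a^{(m+1)}A_b^{(m+1)})$ with $M_{a,b}=A_a^{(i)}A_b^{(m+1)}-A_b^{(i)}A_a^{(m+1)}$, and then substitute the defining recurrence $A_n^{(\cdot)}=\sum_{j=1}^{m+1}a_n^{(j)}A_{n-j}^{(\cdot)}$ into $M_{n,n-1}$. The $j=1$ term cancels, and after dividing by the common factor $A_{n-1}^{(m+1)}$ one obtains, for $n$ large enough that all indices are nonnegative,
\begin{equation*}
d_n\,A_n^{(m+1)}=-\sum_{j=2}^{m+1}a_n^{(j)}\bigl(Q_{n-1}^{(i)}-Q_{n-j}^{(i)}\bigr)A_{n-j}^{(m+1)}.
\end{equation*}
Taking $p$-adic norms and using the ultrametric inequality, condition \eqref{eq:cond-conv} (which gives $|a_n^{(j)}|<|a_n^{(1)}|$ for $j\geq 2$) and Lemma~\ref{lem:norme} (which gives $|A_{n-j}^{(m+1)}|/|A_n^{(m+1)}|=\big(\prod_{h=n-j+1}^{n}|a_h^{(1)}|\big)^{-1}$), the factor $|a_n^{(j)}|\,|A_{n-j}^{(m+1)}|/|A_n^{(m+1)}|$ is strictly bounded by $\big(\prod_{h=n-j+1}^{n-1}|a_h^{(1)}|\big)^{-1}\leq 1$. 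Since moreover $Q_{n-1}^{(i)}-Q_{n-j}^{(i)}=\sum_{l=1}^{j-1}d_{n-l}$ telescopes and $j-1\leq m$, the ultrametric inequality yields the key estimate
\begin{equation*}
|d_n|<\max\{|d_{n-1}|,\ldots,|d_{n-m}|\}.
\end{equation*}

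Finally I would translate this into valuations: setting $z_n=v_p(d_n)$ the estimate reads $z_n>\min\{z_{n-1},\ldots,z_{n-m}\}$, which is precisely the hypothesis of Lemma~\ref{prop:generalfact} after reindexing. An additive shift making the $z_n$ nonnegative is harmless, and is legitimate because, exactly as in the proof of Lemma~\ref{prop:generalfact}, the windowed minima are non-decreasing and hence bounded below; the finitely many or degenerate cases in which a whole window of differences vanishes force all subsequent differences to vanish and give convergence directly. Lemma~\ref{prop:generalfact} then gives $z_n\to\infty$, i.e. $|d_n|\to 0$, whence $(Q_n^{(i)})_{n}$ is Cauchy and converges; as $i$ was arbitrary, the theorem follows.

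I expect the main obstacle to be the norm bookkeeping that produces the strict inequality $|d_n|<\max_{1\le l\le m}|d_{n-l}|$: one must carefully combine condition \eqref{eq:cond-conv} with the exact value of $|A_k^{(m+1)}|$ furnished by Lemma~\ref{lem:norme}, and keep track of the telescoping range $l\leq m$, so that the resulting valuation inequality matches the shape $z_{n+m+1}>\min\{z_{n+1},\ldots,z_{n+m}\}$ required by Lemma~\ref{prop:generalfact}. Everything after that estimate is a mechanical invocation of the two preceding lemmas.
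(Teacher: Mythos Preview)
Your proof is correct and follows essentially the same route as the paper: expand $Q_n^{(i)}-Q_{n-1}^{(i)}$ via the recurrence for the $A_n^{(\cdot)}$, use Lemma~\ref{lem:norme} together with \eqref{eq:cond-conv} to bound each coefficient $|a_n^{(j)}A_{n-j}^{(m+1)}/A_n^{(m+1)}|$ strictly below $1$, telescope to reach $|d_n|<\max_{1\le l\le m}|d_{n-l}|$, and conclude with Lemma~\ref{prop:generalfact}. If anything, you are slightly more explicit than the paper about the harmless issues (the shift needed to make the valuations nonnegative, and the degenerate case where a full window of differences vanishes), which the paper passes over silently.
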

\begin{proof} For $i=1,\ldots , m$ and $k$ sufficiently large, we have
\begin{eqnarray*}
|Q_{k+1}^{(i)}-Q_k^{(i)} | &=& \left |\frac {A^{(i)}_{k+1}A^{(m+1)}_{k}-A^{(i)}_{k}A^{(m+1)}_{k+1} }{A^{(m+1)}_{k+1}A^{(m+1)}_{k}}\right |\\
&=& \frac 1 {|A^{(m+1)}_{k+1}A^{(m+1)}_{k}|} \left |\sum_{j=1}^{m+1} a_{k+1}^{(j)} A_{k+1-j}^{(i)}A_k^{(m+1)} - \sum_{j=1}^{m+1} a_{k+1}^{(j)} A_{k+1-j}^{(m+1)}A_k^{(i)}\right |\\
&=& \frac 1 {|A^{(m+1)}_{k+1}|} \left | \sum_{j=2}^{m+1} a_{k+1}^{(j)} A_{k+1-j}^{(m+1)}\left (Q^{(i)}_{k+1-j}- Q^{(i)}_k  \right )\right |
\end{eqnarray*}
By Lemma \ref{lem:norme}, 
$$\left | a_{k+1}^{(j)} \frac{ A_{k+1-j}^{(m+1)}}{A_{k+1}^{(m+1)}}\right |= \frac {| a_{k+1}^{(j)}|}{\prod_{h=k+2-j}^{k+1} |a^{(1)}_h|}  $$
and the latter is $<1$ for $j\geq 2$ by \eqref{eq:cond-conv}.
Therefore we obtain
$$|Q_{k+1}^{(i)}-Q_k^{(i)} | < \max_{2\leq j\leq m+1}\left\{ \left|Q_k^{(i)}-Q_{k+1-j}^{(i)} \right |\right\}.$$
By the ultrametric inequality 
$$\left|Q_k^{(i)}-Q_{k+1-j}^{(i)} \right |\leq \max_{k+1-j<h\leq k}\left\{|Q_h^{(i)}-Q_{h-1}^{(i)}|\right\}$$
so that 
$$|Q_{k+1}^{(i)}-Q_k^{(i)} | < \max_{k-m< h\leq k}\left\{|Q_h^{(i)}-Q_{h-1}^{(i)}|\right\}.$$
Then, Lemma \ref{prop:generalfact} ensures that $|Q_{k+1}^{(i)}-Q_k^{(i)} |\to 0$ for $k\to \infty$, so that the sequence $(Q_n^{(i)})$ is a Cauchy sequence.
\end{proof}

Thus, we have proved that a MCF whose partial quotients satisfy \eqref{eq:cond-conv} converges $p$--adically. Until now, we started from partial quotients and studied the possible convergence of the associated MCF. Usually, MCFs are presented considering an input sequence of elements $(\alpha_0^{(1)}, \ldots, \alpha_0^{(m)})$ and the next complete quotients are iteratively evaluated by means of equations \eqref{eq:alg-mcf}. The partial quotients are obtained applying certain functions, which we will denote by $s^{(i)}$, to the complete quotients (e.g., in the real simple case, $s^{(i)}$'s are all equal to the floor function). In the next theorem, we see a condition about the functions $s^{(i)}$ in order to obtain a MCF that converges to a $m$--tuple $(\alpha_0^{(1)}, \ldots, \alpha_0^{(m)})$. In particular, the theorem provides some conditions for constructing several functions $s^{(i)}$ useful for constructing iterative algorithms. All the functions appearing in literature (as the function used by Browkin \cite{Bro2}, Ruban \cite{Rub}, Schneider \cite{Sch}, Tamura and Yasutomi \cite{Tam}) can be seen as particular realizations of the functions $s^{(i)}$ described below.

\begin{theorem} \label{thm:conv-alg-s} 
Let $(a_n^{(m+1)})_{n \in \NN}$ be a sequence of non--zero elements in $\QQ$, with $a_0^{(m+1)} = 1$. Assume to have a collection of functions  
$$s^{(i)}:\QQ_p\rightarrow \QQ,\  i=1,\ldots, m-1,\quad s_n^{(m)}:\QQ_p\rightarrow \QQ, \ n\in\NN$$ 
satisfying, for every $\gamma\in\QQ_p^*$ the following conditions
\begin{eqnarray}\label{eq:hypoalgo1}
|\gamma-s^{(i)}(\gamma)| &<&\min\{|\gamma|,1\}\hbox{ for }i=1,\ldots, m-1\\\label{eq:hypoalgo2}
|\gamma-s_n^{(m)}(\gamma)| &<&\min\{|\gamma|,1\}\hbox{ for } n\in\NN\\
\label{eq:hypoalgo3}
|\gamma-s_n^{(m)}(\gamma)| &\leq & |a_{n+1}^{(m+1)}|.
\end{eqnarray}
and $s^{(i)}(0) = s_n^{m}(0) = 0$ for  $i=1,\ldots, m-1, n\in\NN$.
For any $\alpha_0^{(i)} \in \mathbb Q_p$, for $i = 1, ..., m$, define, when possible,
\begin{equation} \label{eq:ai} \begin{cases}  a_n^{(i)} = s^{(i)}(\alpha_n^{(i)}) \hbox{ for } i=1,\ldots, m-1\cr \cr
a_n^{(m)} = s_n^{(m)}(\alpha_n^{(m)})\cr \cr
\alpha_{n+1}^{(1)} = \cfrac{a_{n+1}^{(m+1)}}{\alpha_n^{(m)} - a_n^{(m)}} \cr \cr
\alpha_{n+1}^{(i)} = a_{n+1}^{(m+1)} \cdot \cfrac{\alpha_n^{(i-1)} - a_n^{(i-1)}}{\alpha_n^{(m)} - a_n^{(m)}}, \quad i = 2, ..., m
\end{cases}
\end{equation}
for $n = 0, 1, 2, ...$. Then the MCF defined by the partial quotients $a_n^{(i)}$'s converges to $(\alpha^{(1)},\ldots, \alpha^{(m)})$.
\end{theorem}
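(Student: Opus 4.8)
The plan is to reduce the statement to Theorem~\ref{teo:convergence}: I will first show that the partial quotients produced by \eqref{eq:ai} satisfy the convergence conditions \eqref{eq:cond-conv}, so that the convergents $Q_n^{(i)}$ converge $p$--adically, and then identify the common limit with $\alpha_0^{(i)}$ by means of the exact identity \eqref{eq:alpha0}. If at some step $\alpha_n^{(m)}-a_n^{(m)}=0$ the recursion stops and the MCF is finite; by Remark~2 it then reproduces the input exactly, so I may assume the MCF is infinite.

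The first key step is to control the $p$--adic norms of the complete quotients. From $\alpha_n^{(1)}=a_n^{(m+1)}/(\alpha_{n-1}^{(m)}-a_{n-1}^{(m)})$ together with \eqref{eq:hypoalgo3} applied to $\gamma=\alpha_{n-1}^{(m)}$ (which gives $|\alpha_{n-1}^{(m)}-a_{n-1}^{(m)}|\le|a_n^{(m+1)}|$) I obtain $|\alpha_n^{(1)}|\ge1$ for every $n\ge1$; the same identity with \eqref{eq:hypoalgo2} (which forces $|\alpha_{n-1}^{(m)}-a_{n-1}^{(m)}|<1$) yields $|a_n^{(m+1)}|<|\alpha_n^{(1)}|$. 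For $i=2,\ldots,m$ the relation $\alpha_n^{(i)}/\alpha_n^{(1)}=\alpha_{n-1}^{(i-1)}-a_{n-1}^{(i-1)}$ coming from \eqref{eq:ai}, combined with \eqref{eq:hypoalgo1}, gives $|\alpha_n^{(i)}|<|\alpha_n^{(1)}|$. Finally the ultrametric property that $|x-y|<|x|$ forces $|y|=|x|$, applied to \eqref{eq:hypoalgo1}--\eqref{eq:hypoalgo2}, shows $|a_n^{(i)}|=|\alpha_n^{(i)}|$ for $i=1,\ldots,m$ (the case $\alpha_n^{(i)}=0$ being covered by the normalization $s(0)=0$). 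Assembling these facts, $|a_n^{(1)}|=|\alpha_n^{(1)}|\ge1$ while $|a_n^{(i)}|<|a_n^{(1)}|$ for $i=2,\ldots,m+1$, which is exactly \eqref{eq:cond-conv}. Theorem~\ref{teo:convergence} then guarantees that each $(Q_n^{(i)})$ is Cauchy and hence converges in $\QQ_p$.

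It remains to identify the limit. Writing $N_n^{(i)}=\sum_{j=1}^{m+1}\alpha_n^{(j)}A_{n-j}^{(i)}$ and $D_n=\sum_{j=1}^{m+1}\alpha_n^{(j)}A_{n-j}^{(m+1)}$, identity \eqref{eq:alpha0} reads $\alpha_0^{(i)}=N_n^{(i)}/D_n$. I will compute $\alpha_0^{(i)}-Q_{n-1}^{(i)}$: the $j=1$ terms cancel, and using $A_{n-j}^{(i)}A_{n-1}^{(m+1)}-A_{n-1}^{(i)}A_{n-j}^{(m+1)}=A_{n-1}^{(m+1)}A_{n-j}^{(m+1)}\bigl(Q_{n-j}^{(i)}-Q_{n-1}^{(i)}\bigr)$ one finds
\[
\alpha_0^{(i)}-Q_{n-1}^{(i)}=\frac{\sum_{j=2}^{m+1}\alpha_n^{(j)}A_{n-j}^{(m+1)}\bigl(Q_{n-j}^{(i)}-Q_{n-1}^{(i)}\bigr)}{D_n}.
\]
By Lemma~\ref{lem:norme} and the norm inequalities above, the $j=1$ summand strictly dominates $D_n$, so $|D_n|=|\alpha_n^{(1)}|\prod_{h=1}^{n-1}|a_h^{(1)}|$; comparing this with $|\alpha_n^{(j)}A_{n-j}^{(m+1)}|$ for $j\ge2$ (again via Lemma~\ref{lem:norme}, since the missing factors $|a_h^{(1)}|$ are $\ge1$) gives $|\alpha_n^{(j)}A_{n-j}^{(m+1)}/D_n|<1$. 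Hence $|\alpha_0^{(i)}-Q_{n-1}^{(i)}|\le\max_{2\le j\le m+1}|Q_{n-j}^{(i)}-Q_{n-1}^{(i)}|$, and this tends to $0$ because $(Q_n^{(i)})$ is Cauchy. Therefore $Q_n^{(i)}\to\alpha_0^{(i)}$, as claimed.

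I expect the main obstacle to be this last paragraph: the cancellation and the factorization in terms of the convergents must be carried out carefully, and the decisive point is the norm computation of $D_n$ (showing the leading term dominates through Lemma~\ref{lem:norme}), which is precisely what upgrades the mere Cauchy property into convergence to the \emph{prescribed} input. By contrast, the verification of \eqref{eq:cond-conv} in the second paragraph is routine ultrametric bookkeeping once the three hypotheses on the $s^{(i)}$ are read off against the recursion \eqref{eq:ai}.
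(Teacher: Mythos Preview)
Your proof is correct and follows essentially the same approach as the paper: you verify conditions \eqref{eq:cond-conv} from the hypotheses on the $s^{(i)}$ exactly as the paper does, invoke Theorem~\ref{teo:convergence}, and then identify the limit via \eqref{eq:alpha0} and the norm computation of the denominator through Lemma~\ref{lem:norme}. The only cosmetic difference is that the paper introduces the limit $\gamma^{(i)}=\lim Q_n^{(i)}$ and writes $\alpha_0^{(i)}-\gamma^{(i)}=\sum_j f_n^{(j)}(Q_{n-j}^{(i)}-\gamma^{(i)})$ with bounded weights $f_n^{(j)}=\alpha_n^{(j)}A_{n-j}^{(m+1)}/D_n$, whereas you compare $\alpha_0^{(i)}$ directly to $Q_{n-1}^{(i)}$; the underlying estimate is identical.
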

\begin{proof}
First we show that the MCF defined by the partial quotients $a_n^{(i)}$'s converges, i.e., conditions \eqref{eq:cond-conv} are satisfied. By (\ref{eq:ai}) and (\ref{eq:hypoalgo3}), for $n\geq 1$, we have
$$|a_n^{(1)}|=|\alpha_n^{(1)}|=\left| \cfrac{a_{n}^{(m+1)}}{\alpha_{n-1}^{(m)} - a_{n-1}^{(m)}}\right |\geq 1. $$
Moreover, notice that the above equality gives also $|a_n^{(m+1)}|< |a_n^{(1)}|$ for $n\geq 1$. Again by (\ref{eq:ai}) and (\ref{eq:hypoalgo3}), for $i=2,\ldots, m$ and $n\geq 1$, we obtain
$$|a_{n}^{(i)}|=|\alpha_{n}^{(i)}|=|\alpha_n^{(1)}||{\alpha_{n-1}^{(i-1)} - a_{n-1}^{(i-1)}}|<|\alpha_n^{(1)}|=|a_{n}^{(1)}|.$$
Now, we prove that the convergents $Q_n^{(i)}$ have as limit the quantities $\alpha_0^{(i)}$, for $i=1, \ldots, m$. For $n$ sufficiently large, we can write
\begin{equation*}
\alpha_0^{(i)}=\frac{\sum_{j=1}^{m+1}\alpha_{n}^{(j)}A^{(i)}_{n-j}}{\sum_{j=1}^{m+1}\alpha_{n}^{(j)}A^{(m+1)}_{n-j}}
= \sum_{j=1}^{m+1} f^{(j)}_nQ_{n-j}^{(i)}
\end{equation*}
where
$$f_n^{(j)}=\frac{ \alpha_n^{(j)}A_{n-j}^{(m+1)}}{\sum_{j=1}^{m+1}\alpha_{n}^{(j)}A^{(m+1)}_{n-j}}.$$
Let us observe that hypotheses \eqref{eq:hypoalgo1} and \eqref{eq:hypoalgo2} ensure that $|\alpha_n^{(i)}| = |a_n^{(i)}|$, for $n \in \NN$, $i =1, \ldots, m$ and consequently $|\alpha_n^{(i)}| < |a_n^{(1)}|$. Thus, by Lemma \ref{lem:norme} and conditions \eqref{eq:cond-conv} we have
\begin{eqnarray*} |\alpha_n^{(1)}A_{n-1}^{(m+1)}|&=&\prod_{h=1}^{n}|a^{(1)}_h|,\\
|\alpha_n^{(j)}A_{n-j}^{(m+1)}| &= &|\alpha_n^{(j)}|\prod_{h=1}^{n-j}|a^{(1)}_h|< \prod_{h=1}^{n}|a^{(1)}_h|\hbox{ for } j=2,\ldots, m+1
\end{eqnarray*}
so that
$$|\sum_{j=1}^{m+1}\alpha_{n}^{(j)}A^{(m+1)}_{n-j}|= \prod_{h=1}^{n}|a^{(1)}_h|.$$
Therefore 
$$|f_n^{(1)}| =1\hbox{ and } |f_n^{(j)}|<1\hbox{ for } j=2,\ldots, m+1;$$
moreover by definition
$$\sum_{j=1}^{(m+1)} f^{(j)}_n=1.$$
For $i=1,\ldots, m$ let $\gamma^{(i)}=\lim_{n\to\infty} Q^{(i)}_n$. Then
\begin{eqnarray*}
\alpha_0^{(i)}-\gamma^{(i)} &=& \sum_{j=1}^{m+1} f^{(j)}_nQ_{n-j}^{(i)}-\sum_{j=1}^{m+1} f^{(j)}_n\gamma^{(i)}\\
&=& \sum_{j=1}^{m+1} f^{(j)}_n(Q_{n-j}^{(i)}-\gamma^{(i)})
\end{eqnarray*}
that converges to 0, for $n\to \infty$, because the $f_n^{(j)}$ are bounded.
\end{proof}

In the previous theorems, we have seen the weak convergence of a MCF. Furthermore, we can show that in the case of convergent $p$--adic MCFs we also have strong convergence (for the definition of strong convergence in the context of continued fractions see, e.g., \cite{Nak}). Let us define 
$$V^{(i)}_n=A^{(i)}_n-\alpha^{(i)}A^{(m+1)}_n,$$
for any $n\geq -m$ and $i=1, \ldots, m$. It is straightforward to see that 
\begin{equation}\label{eq:ricorsioneV}
V^{(i)}_n=\sum_{j=1}^{m+1}a_n^{(j)}V^{(i)}_{n-j}, \quad \sum_{j=1}^{m+1} \alpha^{(j)}_{n}V^{(i)}_{n-j}=0.
\end{equation}

\begin{theorem}\label{teo:strong convergence} 
Under the hypotheses of Theorem \ref{thm:conv-alg-s}, we have 
$$\lim_{n\to\infty }V^{(i)}_n=0,$$
for $i=1,\ldots m$.
\end{theorem}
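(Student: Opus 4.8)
The plan is to avoid estimating the competition between the growth of the denominators $|A_n^{(m+1)}|=\prod_{h=1}^n|a_h^{(1)}|$ and the convergence rate of the $Q_n^{(i)}$, and instead to argue directly from the second relation in \eqref{eq:ricorsioneV}, namely $\sum_{j=1}^{m+1}\alpha_n^{(j)}V_{n-j}^{(i)}=0$, reducing everything to Lemma \ref{prop:generalfact} just as was done for the convergents in Theorem \ref{teo:convergence}. Throughout I fix the index $i\in\{1,\ldots,m\}$.

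First I would isolate the leading term, writing $\alpha_n^{(1)}V_{n-1}^{(i)}=-\sum_{j=2}^{m+1}\alpha_n^{(j)}V_{n-j}^{(i)}$. In the proof of Theorem \ref{thm:conv-alg-s} it was established that $|\alpha_n^{(1)}|=|a_n^{(1)}|\geq 1$ and that $|\alpha_n^{(j)}|<|a_n^{(1)}|=|\alpha_n^{(1)}|$ for $j=2,\ldots,m$, while for $j=m+1$ one has $\alpha_n^{(m+1)}=a_n^{(m+1)}$ with $|a_n^{(m+1)}|<|a_n^{(1)}|$; hence $|\alpha_n^{(j)}|<|\alpha_n^{(1)}|$ for every $j=2,\ldots,m+1$. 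Dividing by $\alpha_n^{(1)}$ and applying the ultrametric inequality gives
\begin{equation*}
|V_{n-1}^{(i)}|\leq \max_{2\leq j\leq m+1}\frac{|\alpha_n^{(j)}|}{|\alpha_n^{(1)}|}\,|V_{n-j}^{(i)}|<\max_{2\leq j\leq m+1}|V_{n-j}^{(i)}|,
\end{equation*}
where the last strict inequality holds whenever the right-hand side is nonzero, since each ratio $|\alpha_n^{(j)}|/|\alpha_n^{(1)}|$ is strictly less than $1$. After reindexing this reads
\begin{equation*}
|V_n^{(i)}|<\max\{|V_{n-1}^{(i)}|,\ldots,|V_{n-m}^{(i)}|\}\qquad\text{for }n\gg 0.
\end{equation*}

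Next I would pass to $p$--adic valuations, setting $z_n=v_p(V_n^{(i)})$ so that $|V_n^{(i)}|=p^{-z_n}$. The displayed inequality translates into $z_n>\min\{z_{n-1},\ldots,z_{n-m}\}$, which is exactly the hypothesis of Lemma \ref{prop:generalfact} (its condition $z_{n+m+1}>\min\{z_{n+1},\ldots,z_{n+m}\}$ is the same after identifying the two windows). Before invoking the lemma I would settle the degenerate situations: if $V_n^{(i)}=0$ for all large $n$ the claim is immediate; otherwise the relevant $z_n$ are genuine integers, and, as in the proof of Lemma \ref{prop:generalfact}, the running window-minimum $\min\{z_{n-1},\ldots,z_{n-m}\}$ is nondecreasing, so the $z_n$ are bounded below and, after subtracting a fixed constant, may be taken to be natural numbers, which is all the lemma actually uses. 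The lemma then yields $z_n\to\infty$, i.e. $|V_n^{(i)}|\to 0$, which is the assertion.

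The leading-term estimate is routine, so the only delicate point I expect is the bookkeeping needed to feed the relation into Lemma \ref{prop:generalfact}: one must make sure the strict inequality is genuinely available (hence that the relevant window of $V$'s is not identically zero at the step considered) and that the integer valuations can legitimately be regarded as the natural-number sequence for which the lemma is stated. Once these points are verified, no separate bound on $|A_n^{(m+1)}|$ is required, because the valuation argument handles the convergence $V_n^{(i)}\to 0$ intrinsically.
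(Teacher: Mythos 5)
Your argument is correct, and it reaches the same endpoint as the paper (an inequality of the form $|V^{(i)}_n|<\max$ of the preceding window, fed into Lemma \ref{prop:generalfact}) by a genuinely different decomposition of the recursion \eqref{eq:ricorsioneV}. The paper subtracts the two relations in \eqref{eq:ricorsioneV} to write $V^{(i)}_n=\sum_{j=1}^{m+1}(a_n^{(j)}-\alpha_n^{(j)})V^{(i)}_{n-j}$ and then uses the approximation property $|a_n^{(j)}-\alpha_n^{(j)}|<1$ coming from \eqref{eq:hypoalgo1}--\eqref{eq:hypoalgo2}; all coefficients are small at once and no division is needed. You instead use only the second relation, solve for the dominant term $\alpha_n^{(1)}V^{(i)}_{n-1}$, and exploit the size hierarchy $|\alpha_n^{(j)}|<|\alpha_n^{(1)}|$ for $j\geq 2$ (which is indeed established in the proof of Theorem \ref{thm:conv-alg-s}, including $|\alpha_n^{(m+1)}|=|a_n^{(m+1)}|<|a_n^{(1)}|$); this yields a window of length $m$ rather than $m+1$ and makes the mechanism closer to the proof of Theorem \ref{teo:convergence}. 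The two proofs are of comparable length; the paper's avoids the division by $\alpha_n^{(1)}$ and the attendant case distinction, while yours isolates more explicitly which structural feature (the dominance of $\alpha_n^{(1)}$) drives the decay. Your care about the degenerate cases --- vanishing terms in the window, and shifting the valuations so that Lemma \ref{prop:generalfact}, stated for natural numbers, applies --- is legitimate and in fact addresses points the paper's own proof passes over silently; note only that the dichotomy should be phrased as ``either the window $V^{(i)}_{n-1},\ldots,V^{(i)}_{n-m}$ is eventually identically zero (whence all later $V^{(i)}_n$ vanish and the claim is trivial), or one works with valuations in $\ZZ\cup\{\infty\}$, for which the monotonicity argument of the lemma still goes through.''
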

\begin{proof} By \eqref{eq:ricorsioneV}, we have
$$V^{(i)}_n=\sum_{j=1}^{m+1}a_n^{(j)}V^{(i)}_{n-j}=\sum_{j=1}^{m+1}a_n^{(j)}V^{(i)}_{n-j}-\sum_{j=1}^{m+1}\alpha_n^{(j)}V^{(i)}_{n-j}=\sum_{j=1}^{m+1}(a_n^{(j)}-\alpha_n^{(j)}) V^{(i)}_{n-j}, $$
 for any $n\geq 1$. Moreover by the hypotheses on the partial and complete quotients, we have $|a_n^{(j)}-\alpha_n^{(j)}|<1$, so that
$$|V^{(i)}_n|\leq \max_{1\leq j\leq m+1}\{|a_n^{(j)}-\alpha_n^{(j)}|| V^{(i)}_{n-j}|\}< \max_{1\leq j\leq m+1}\{|V^{(i)}_{n-j}|\}.$$
Passing to valuations and applying Lemma \ref{prop:generalfact}, we obtain the result.
\end{proof}

\begin{remark}
It is interesting to observe that, in the case of $\mathbb K = \mathbb R$, i.e., when working with classical MCFs, strong convergence is not ever guaranteed if $m > 1$, see \cite{Dub}. On the contrary, in the case of $\mathbb K = \mathbb Q_p$, when convergence is guaranteed, we also have strong convergence.
\end{remark}

\section{A generalized Euclidean algorithm for multidimensional continued fractions in $\mathbb Q_p$} \label{sec:alg}

\subsection{An analogous of the Jacobi--Perron algorithm in $\mathbb Q_p$}
In the previous sections, we have introduced MCF from a formal point of view and studied the properties of convergence in the field of the $p$--adic numbers. Here, we focus on a specific algorithm that, starting from a $m$--tuple of numbers in $\mathbb Q_p$, produces the partial quotients of the corresponding MCF. As we will see, the MCF constructed by the algorithm has the partial quotients $a_n^{(m+1)}$ equal to 1 for any $n \geq 0$. Hence, first of all we see that given a MCF with general partial quotients, we can always get a MCF with $a_n^{(m+1)} = 1$, for $n \geq 0$.

\begin{proposition}\label{prop:deomogeneizza}
Let $(w_n)_{n\in \NN}$ be any sequence of non--zero elements in $\mathbb K$, and put $w_{k}=1$ for $k<0$. Let us consider the partial quotients $(a^{(i)}_n)_{n\in\NN}$ of a MCF, for $i=1,\ldots, m+1$, whose convergents are denoted by $Q_n^{(i)}$ and $A_n^{(i)}$ are, as usual, their numerators and denominators. Define
\begin{equation} \label{eq:pqt}
\tilde a^{(i)}_n=\frac {w_{n-i}}{w_n} a_n^{(i)}, \quad i= 1, \ldots, m+1, \quad n\in\NN.
\end{equation}
Let $\tilde{Q}_n^{(i)}$ be the convergents of the MCF defined by the partial quotients \eqref{eq:pqt} and $\tilde{A}_n^{(i)}$ be the numerators and denominators of these convergents. Then 
$$A_n^{(i)}=w_n \tilde{A}_n^{(i)},\quad \hbox{ for } n\in\NN \hbox{ and } i=1,\ldots m+1,$$
so that
$$Q_n^{(i)}=\tilde{Q}_n^{(i)},\quad \hbox{ for } n\in\NN \hbox{ and } i=1,\ldots m.$$
\end{proposition}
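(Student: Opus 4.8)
The plan is to prove the numerator/denominator identity $A_n^{(i)} = w_n \tilde A_n^{(i)}$ by induction on $n$, and then read off the equality of convergents as an immediate consequence. The whole argument rests on the observation that both families $(A_n^{(i)})$ and $(\tilde A_n^{(i)})$ obey the same linear recurrence defining the numerators and denominators of the convergents, namely $A_n^{(i)} = \sum_{j=1}^{m+1} a_n^{(j)} A_{n-j}^{(i)}$ and $\tilde A_n^{(i)} = \sum_{j=1}^{m+1} \tilde a_n^{(j)} \tilde A_{n-j}^{(i)}$, with the same Kronecker initial data at negative indices. The factor $w_n$ will be shown to propagate cleanly through this recurrence because of how the $\tilde a_n^{(j)}$ are defined.

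First I would settle the base cases $n \le 0$. For $1 \le j \le m$ the initial conditions give $A_{-j}^{(i)} = \tilde A_{-j}^{(i)} = \delta_{ij}$, and since $w_{-j} = 1$ by the convention $w_k = 1$ for $k < 0$, the claimed identity $A_{-j}^{(i)} = w_{-j}\tilde A_{-j}^{(i)}$ holds trivially. The case $n = 0$ deserves a separate check because $A_0^{(i)} = a_0^{(i)}$ genuinely depends on the partial quotients: here $\tilde A_0^{(i)} = \tilde a_0^{(i)} = (w_{-i}/w_0)\,a_0^{(i)}$, and since $i \ge 1$ forces $w_{-i} = 1$, one obtains $w_0 \tilde A_0^{(i)} = a_0^{(i)} = A_0^{(i)}$.

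For the inductive step at $n \ge 1$, I would start from the recurrence for $\tilde A_n^{(i)}$, multiply through by $w_n$, and substitute the definition $\tilde a_n^{(j)} = (w_{n-j}/w_n)\,a_n^{(j)}$ from \eqref{eq:pqt}. The factor $w_n$ cancels, leaving $w_n \tilde A_n^{(i)} = \sum_{j=1}^{m+1} a_n^{(j)}\,\bigl(w_{n-j}\tilde A_{n-j}^{(i)}\bigr)$. Since every index $n-j$ occurring here is strictly smaller than $n$, the inductive hypothesis replaces $w_{n-j}\tilde A_{n-j}^{(i)}$ by $A_{n-j}^{(i)}$, and the right-hand side becomes exactly $\sum_{j=1}^{m+1} a_n^{(j)} A_{n-j}^{(i)} = A_n^{(i)}$, closing the induction.

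Finally, the statement on convergents follows at once from $w_n \neq 0$:
$$Q_n^{(i)} = \frac{A_n^{(i)}}{A_n^{(m+1)}} = \frac{w_n \tilde A_n^{(i)}}{w_n \tilde A_n^{(m+1)}} = \frac{\tilde A_n^{(i)}}{\tilde A_n^{(m+1)}} = \tilde Q_n^{(i)}.$$
I do not anticipate any genuine obstacle, as the computation is routine once the bookkeeping is correct. The only points demanding care are the two distinct flavours of base case — the Kronecker-delta data at negative indices versus the $a_0^{(i)}$ value at $n = 0$, both relying on $w_k = 1$ for $k < 0$ — and the key structural remark that the cancellation of $w_n$ against the denominator in $\tilde a_n^{(j)}$ is precisely what lets a single factor $w_n$ (rather than an accumulating product) pass through the recurrence.
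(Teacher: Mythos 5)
Your proof is correct and follows essentially the same route as the paper: induction on $n$ using the shared linear recurrence, with the factor $w_n$ cancelling against the denominator in the definition of $\tilde a_n^{(j)}$. The only cosmetic difference is the direction of the computation (you expand $w_n\tilde A_n^{(i)}$ whereas the paper expands $A_n^{(i)}$), and your explicit remarks on the negative-index base case and on $w_n\neq 0$ are slightly more careful than the paper's.
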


\begin{proof}
We prove the proposition by induction. We clearly have $A_n^{(i)}=w_n \tilde{A}_n^{(i)}$, for $n<0$, and $A_0^{(i)}=a_0^{(i)}=w_0\tilde{a}_0^{(i)}=w_0\tilde{A}_0^{(i)}$. For $n\geq 1$, we have
\begin{equation*}
A_n^{(i)}=\sum_{j=1}^{m+1}a_n^{(j)}A_{n-j}^{(i)}=\sum_{j=1}^{m+1}\left(\frac {w_n}{w_{n-j}} \tilde{a}_n^{(j)}\right ) A_{n-j}^{(i)}
\end{equation*}
\begin{equation*}
= \sum_{j=1}^{m+1}\left(\frac {w_n}{w_{n-j}} \tilde{a}_n^{(j)}\right ) \left ( w_{n-j}\tilde{A}_{n-j}^{(i)}\right )= w_n\sum_{j=1}^{m+1} \tilde{a}_n^{(j)}\tilde{A}_{n-j}^{(i)}= w_n\tilde{A}_n^{(i)}.
\end{equation*}
\end{proof}
Let us consider
$$w_n=\prod_{\substack{0<j\leq n\\
                              j\equiv n\bmod{m+1}
                               }}a_j^{(m+1)}, \quad n \in \NN,$$
where $(a^{(i)}_n)_{n\in\NN}$ partial quotients of a MCF as in \eqref{eq:pq}.
The sequence $(w_n)_{n\in\NN}$ can be also described inductively by
$$ w_n=\left\{
\begin{array}{ll}
1 & \hbox{ for  } n\leq 0\\
a_n^{(m+1)} w_{n-(m+1)} &\hbox{ for } n\geq 1
\end{array}
\right .
$$
Defining  $\tilde{a}_n^{(i)},\tilde{A}_n^{(i)},\tilde{Q}_n^{(i)}$ as in Proposition \ref{prop:deomogeneizza}, we get a MCF with partial quotients  $(a^{(i)}_n)_{n\in\NN}$, $i=1, \ldots, m+1$, having the same convergents as the MCF with partial quotients $(a^{(i)}_n)_{n\in\NN}$, and satisfying, for any $n\geq 0$, the following condition:
\begin{equation} \label{eq:deomo}
\tilde{a}^{(m+1)}_n=\frac {w_{n-(m+1)}}{w_n}{a}^{(m+1)}_n=1.
\end{equation}

\begin{remark}
Notice that, in general, transforming the partial quotients using \eqref{eq:deomo} does not preserve the convergence conditions \eqref{eq:cond-conv}. This fact shows that these conditions are sufficient, but not necessary for convergence.
\end{remark}

In the following we will always refer to MCFs whose partial quotients satisfy $a^{(m+1)}_n=1$, for any $n \geq 0$. Note, that in this case, conditions \eqref{eq:cond-conv} can be translated into
\begin{equation}\label{eq:cond-conv-v}
\begin{cases}
\lvert a_n^{(1)} \rvert > 1 \cr
\lvert a_n^{(i)} \rvert < \lvert a_n^{(1)} \rvert, \quad i = 2, \ldots, m
\end{cases}
\end{equation}
for any $n \geq 1$. These convergence conditions are the $p$--adic analogous of the Perron conditions for convergence of MCFs in the real field, see, e.g., \cite{DubA}. Moreover, from now on, $p$ will denote an odd prime. Now, we introduce a function over the $p$--adic numbers that will play the same role of the floor function over the reals.

\begin{definition} \label{def:browkins}
Given any $\alpha \in \mathbb Q_p$, uniquely written as
\begin{equation*}
\alpha=\sum_{j=k}^\infty x_jp^j, \quad k, x_j\in\ZZ, \quad x_j\in \left (-\frac p 2,\frac p 2\right),
\end{equation*}
the \emph{Browkin function} $s: \mathbb Q_p \longrightarrow \QQ$ is defined by
$$s(\alpha)= \sum_{j=k}^0 x_jp^j.$$
We call it \emph{Browkin function}, since $s$ has been introduced and used by Browkin \cite{Bro1} for studying $p$--adic continued fractions.
\end{definition}

The following proposition lists some property of $s$.

\begin{proposition}\label{prop:sproperties}
Given the Browkin function $s$, we have:
\begin{itemize}
\item[a)] $s(0)=0$, $s(-\alpha)=-s(\alpha)$ for every $\alpha\in\QQ_p$;
\item[b)] $s(\alpha)=s(\beta)$ if and only if $\alpha-\beta\in p\ZZ_p$, in particular $s$ is a locally constant map;
\item[c)] whatever topology is considered on $\QQ$, $s$ is a continuous map;
\item[d)] for every $\alpha\in\QQ_p$ either $s(\alpha)=0$ or $|s(\alpha)|\geq 1$;
\item[e)] if $s(\alpha)\not=0$ then $|s(\alpha)|=|\alpha|$;
\item[f)] $s(\QQ_p)=\ZZ\left [\frac 1 p\right ]\cap \left (-\frac p2,\frac p 2\right)$.
\end{itemize}
\end{proposition}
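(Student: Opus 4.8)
The plan is to exploit the decomposition furnished by the balanced $p$--adic expansion: writing $\alpha=\sum_{j=k}^\infty x_jp^j$ with $x_j\in\{-\frac{p-1}2,\ldots,\frac{p-1}2\}$, set $t(\alpha)=\sum_{j\geq 1}x_jp^j$, so that $\alpha=s(\alpha)+t(\alpha)$ with $t(\alpha)\in p\ZZ_p$. Two features of $s(\alpha)$ drive everything. First, $s(\alpha)\in\ZZ[\frac1p]$, being a finite sum of (possibly negative) powers of $p$. Second, and crucially, $s(\alpha)$ obeys an \emph{archimedean} bound: writing $n=-k\geq 0$, one has $s(\alpha)=N/p^{n}$ where $N=\sum_{i=0}^{n}x_{i-n}p^{i}$ is an integer given in balanced base $p$ with at most $n+1$ digits, whence $|N|\leq\sum_{i=0}^{n}\frac{p-1}2p^{i}=\frac{p^{n+1}-1}2$ and therefore $|s(\alpha)|<\frac p2$ as a real number. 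I would record these facts at the outset, since most items follow from them together with uniqueness of the balanced expansion.

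With this in hand, a), d), e) are short. For a), note that $-\alpha=\sum_j(-x_j)p^j$ is again a balanced expansion because the digit set is symmetric (here oddness of $p$ is used); by uniqueness these are the digits of $-\alpha$, so $s(-\alpha)=-s(\alpha)$, and $s(0)=0$ is immediate. For d) and e), observe that since each nonzero digit is prime to $p$, the valuation of $s(\alpha)$ equals the least index $j\leq 0$ with $x_j\neq 0$; hence if $s(\alpha)\neq 0$ then $v_p(s(\alpha))\leq 0$, giving $|s(\alpha)|\geq 1$, while $v_p(t(\alpha))\geq 1$ forces $v_p(\alpha)=v_p(s(\alpha))$ by the ultrametric property, i.e.\ $|\alpha|=|s(\alpha)|$.

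For b), the forward implication is immediate from $\alpha-\beta=t(\alpha)-t(\beta)\in p\ZZ_p$ whenever $s(\alpha)=s(\beta)$. The reverse is the first place where the archimedean and $p$--adic estimates must be combined. If $\alpha-\beta\in p\ZZ_p$ then $d:=s(\alpha)-s(\beta)=(\alpha-\beta)-(t(\alpha)-t(\beta))\in p\ZZ_p$; since also $d\in\ZZ[\frac1p]$, an element of $\ZZ[\frac1p]$ with $v_p\geq 1$ is forced to lie in $p\ZZ$; but $|d|<p$ as a real number by the bound above, and the only multiple of $p$ in $(-p,p)$ is $0$, so $d=0$. Because $\alpha-\beta\in p\ZZ_p$ is an open condition, this shows $s$ is constant on each coset $\alpha+p\ZZ_p$, i.e.\ locally constant; hence c) holds, a locally constant map being continuous into any topology on the target.

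Finally f). The inclusion $s(\QQ_p)\subseteq\ZZ[\frac1p]\cap(-\frac p2,\frac p2)$ is exactly the two recorded features of $s(\alpha)$. For the reverse, given $q\in\ZZ[\frac1p]\cap(-\frac p2,\frac p2)$ write $q=a/p^{n}$ with $a\in\ZZ$, $n\geq 0$; the bound $|q|<\frac p2$ gives $|a|\leq\frac{p^{n+1}-1}2$, so the balanced base--$p$ representation of $a$ uses only the digits of index $0,\ldots,n$, say $a=\sum_{i=0}^{n}c_ip^{i}$. Reindexing, $q=\sum_{j=-n}^{0}c_{j+n}p^{j}$ is a balanced $p$--adic expansion supported in indices $\leq 0$, so by uniqueness it is \emph{the} expansion of $q$ and $s(q)=q$. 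The main obstacle throughout is precisely this recurring interplay: the non--archimedean hypotheses (membership in $p\ZZ_p$ or $\ZZ[\frac1p]$) only pin down $s$ after being confronted with the archimedean digit--count bound $|s(\alpha)|<\frac p2$, which is what makes the balanced (rather than the standard $\{0,\ldots,p-1\}$) expansion, and the oddness of $p$, indispensable.
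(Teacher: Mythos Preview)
Your proof is correct and follows essentially the same line as the paper's, only with more detail: the paper simply declares a), b), d), e) ``straightforward'' and derives c) from b) exactly as you do. The one minor divergence is in the reverse inclusion of f): you construct the balanced expansion of $q=a/p^n$ explicitly and read off $s(q)=q$, whereas the paper sets $\beta=\alpha-s(\alpha)\in\ZZ[\tfrac1p]\cap p\ZZ_p=p\ZZ$ and uses $|\beta|_\infty\geq p\Rightarrow |\alpha|_\infty>\tfrac p2$ to force $\beta=0$---precisely the archimedean/$p$--adic interplay you already deployed in your proof of b), so the two arguments are really the same idea packaged differently.
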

\begin{proof}
Properties $a)$, $b)$, $d)$ and $e)$ are straightforward. As for $c)$, notice that point $b)$ implies that $s$ is continuous when $\QQ$ is given the discrete topology. Since the latter is the finest possible topology on $\QQ$, we get the result. \\
We prove $f)$. 
If $\alpha=\sum_{j=-h}^0 x_jp^j$, then
$$|\alpha |_\infty \leq \frac {p-1} 2 \sum_{j=0}^h \frac 1 {p^j}=  \frac {p-1}  2\frac {1-\frac 1{p^{h+1}}} {1-\frac 1 p}<\frac p 2
$$
where $|\cdot|_\infty$ is the Euclidean norm. Conversely let $\alpha\in \ZZ\left [\frac 1 p\right ]\cap \left (-\frac p2,\frac p 2\right)$. Then $$\beta=\alpha-s(\alpha)\in \ZZ\left [\frac 1 p\right ]\cap p\ZZ_p=p\ZZ.$$ 
Suppose $\beta\not=0$;  then $|\beta |_\infty \geq p$ so that $|\alpha |=|s(\alpha)+\beta |> \frac p 2$, a contradiction. Therefore $\beta=0$, so that $\alpha=s(\alpha)\in s(\QQ_p)$.
\end{proof}

The Browkin function $s$ will play the same role as the floor function in \eqref{eq:alg-jp} for defining an algorithm that produces the partial quotients of a MCF convergent to a $m$--tuple $(\alpha^{(1)},\ldots, \alpha^{(m)})\in\QQ_p^m$. \\ \\
\noindent{\bf The p--adic Jacobi--Perron algorithm}
\begin{itemize}
\item Put $\alpha_0^{(i)}=\alpha^{(i)}$ for $i=1,\ldots , m$;
\item Define recursively
\begin{equation} \label{eq:alg} \begin{cases}  a_n^{(i)} = s(\alpha_n^{(i)}) \cr 
\alpha_{n+1}^{(1)} = \cfrac{1}{\alpha_n^{(m)} - a_n^{(m)}} \cr 
\alpha_{n+1}^{(i)} = \alpha_{n+1}^{(1)}\cdot (\alpha_n^{(i-1)} - a_n^{(i-1)}) = \cfrac{\alpha_n^{(i-1)} - a_n^{(i-1)}}{\alpha_n^{(m)} - a_n^{(m)}}, \quad i = 2, ..., m
\end{cases}
\end{equation}
for $n = 0, 1, 2, \ldots$
\end{itemize}
In the following, we will always consider MCFs whose partial quotients are obtained by this algorithm.
In the next proposition, we prove the convergence of the above algorithm. Let us note that it can produce a finite or infinite MCF, like in the real case. In the finite case, the MCF is surely always convergent to a certain $m$--tuple in $\mathbb Q_p$, but the latter can be different from the input $m$--tuple how we will see later. In the infinite case, we need to prove the convergence and we also see that the MCF converges to the input $m$--tuple. 

\begin{proposition} The $MCF$ obtained by the $p$--adic Jacobi--Perron algorithm is convergent. Moreover, if the latter does not stop, then it converges to the input $m$--tuple $(\alpha^{(1)},\ldots, \alpha^{(m)})$.
\end{proposition}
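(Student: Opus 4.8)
The plan is to reduce the statement to the general convergence machinery of Section \ref{sec:conv}, the only genuine subtlety being that the Browkin function $s$ does \emph{not} satisfy the hypotheses of Theorem \ref{thm:conv-alg-s} verbatim; so rather than citing that theorem I would re-run its two-part argument, replacing each appeal to \eqref{eq:hypoalgo1}--\eqref{eq:hypoalgo2} by the corresponding fact drawn from Proposition \ref{prop:sproperties}. The facts I would use repeatedly are: for every $\gamma\in\QQ_p$ one has $\gamma-s(\gamma)\in p\ZZ_p$, hence $|\gamma-s(\gamma)|\leq p^{-1}<1$; and, by parts d) and e), $|s(\gamma)|\leq|\gamma|$ always, with equality whenever $s(\gamma)\neq 0$.

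First I would verify that the partial quotients produced by \eqref{eq:alg} satisfy the convergence conditions \eqref{eq:cond-conv-v}, so that convergence follows from Theorem \ref{teo:convergence} (in the terminating case convergence is automatic, the expansion being finite). For $n\geq 1$ we have $\alpha_n^{(1)}=1/(\alpha_{n-1}^{(m)}-a_{n-1}^{(m)})$ with $\alpha_{n-1}^{(m)}-a_{n-1}^{(m)}=\alpha_{n-1}^{(m)}-s(\alpha_{n-1}^{(m)})\in p\ZZ_p$, whence $|\alpha_n^{(1)}|\geq p$; since then $|\alpha_n^{(1)}-s(\alpha_n^{(1)})|\leq p^{-1}<|\alpha_n^{(1)}|$, the isosceles triangle principle gives $|a_n^{(1)}|=|s(\alpha_n^{(1)})|=|\alpha_n^{(1)}|>1$. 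For $i=2,\ldots,m$ the recursion $\alpha_n^{(i)}=\alpha_n^{(1)}(\alpha_{n-1}^{(i-1)}-a_{n-1}^{(i-1)})$ together with $|\alpha_{n-1}^{(i-1)}-a_{n-1}^{(i-1)}|\leq p^{-1}<1$ yields $|\alpha_n^{(i)}|<|\alpha_n^{(1)}|$, and then $|a_n^{(i)}|\leq|\alpha_n^{(i)}|<|\alpha_n^{(1)}|=|a_n^{(1)}|$, as required.

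To identify the limit in the non-terminating case I would follow the second half of the proof of Theorem \ref{thm:conv-alg-s}. Using \eqref{eq:alpha0} with $\alpha_n^{(m+1)}=1$, write $\alpha_0^{(i)}=\sum_{j=1}^{m+1}f_n^{(j)}Q_{n-j}^{(i)}$, where $f_n^{(j)}=\alpha_n^{(j)}A_{n-j}^{(m+1)}/\sum_{k=1}^{m+1}\alpha_n^{(k)}A_{n-k}^{(m+1)}$ and $\sum_{j}f_n^{(j)}=1$. By Lemma \ref{lem:norme} the summand $j=1$ in the denominator has norm $\prod_{h=1}^n|a_h^{(1)}|$, while for $j=2,\ldots,m+1$ the bound $|\alpha_n^{(j)}|<|a_n^{(1)}|\leq\prod_{h=n-j+1}^n|a_h^{(1)}|$ (valid for $j=m+1$ too, since $|\alpha_n^{(m+1)}|=1<|a_n^{(1)}|$) forces the other summands to be strictly smaller; hence $|f_n^{(1)}|=1$ and $|f_n^{(j)}|<1$ for $j\geq 2$. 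Setting $\gamma^{(i)}=\lim_n Q_n^{(i)}$, the identity $\alpha_0^{(i)}-\gamma^{(i)}=\sum_{j=1}^{m+1}f_n^{(j)}(Q_{n-j}^{(i)}-\gamma^{(i)})$ tends to $0$ because the $f_n^{(j)}$ are bounded, giving $\alpha^{(i)}=\alpha_0^{(i)}=\gamma^{(i)}$.

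The main obstacle is exactly the mismatch flagged above: when $|\alpha_n^{(i)}|<1$ (which can occur for $i\geq 2$) the Browkin function returns $a_n^{(i)}=0$, so the equality $|\alpha_n^{(i)}|=|a_n^{(i)}|$ exploited in Theorem \ref{thm:conv-alg-s} fails. The resolution is that this equality is never truly needed: the weaker inequality $|a_n^{(i)}|\leq|\alpha_n^{(i)}|$ suffices for \eqref{eq:cond-conv-v}, and the strict bound $|\alpha_n^{(i)}|<|\alpha_n^{(1)}|$ that controls the $f_n^{(j)}$ comes straight from the recursion \eqref{eq:alg} rather than from any normalization property of $s$. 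Once this is observed, every remaining step is the routine $p$-adic estimate already carried out in Section \ref{sec:conv}.
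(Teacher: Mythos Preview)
Your argument is correct and follows the same two-step strategy the paper intends: verify conditions \eqref{eq:cond-conv-v} to invoke Theorem~\ref{teo:convergence}, then identify the limit via the expression \eqref{eq:alpha0} and the coefficients $f_n^{(j)}$. The paper's own proof simply cites Theorem~\ref{thm:conv-alg-s}, asserting that the Browkin function satisfies its hypotheses by Proposition~\ref{prop:sproperties}.

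You are right to flag that this citation is not literally justified: when $0<|\gamma|<1$ one has $s(\gamma)=0$ and hence $|\gamma-s(\gamma)|=|\gamma|$, so the strict inequality \eqref{eq:hypoalgo1} fails, and with it the equality $|\alpha_n^{(i)}|=|a_n^{(i)}|$ used inside the proof of Theorem~\ref{thm:conv-alg-s}. Your repair is exactly the right one: for the convergence conditions only $|a_n^{(i)}|\leq|\alpha_n^{(i)}|$ is needed, and for the bound $|\alpha_n^{(j)}A_{n-j}^{(m+1)}|<\prod_{h=1}^n|a_h^{(1)}|$ one should use the inequality $|\alpha_n^{(j)}|<|\alpha_n^{(1)}|$ coming directly from the recursion \eqref{eq:alg}, rather than routing through $|a_n^{(j)}|$. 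So your proof is a more careful version of the paper's, closing a genuine (if easily fixed) gap in how Theorem~\ref{thm:conv-alg-s} is invoked.
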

\begin{proof}
The thesis follows from Theorem \ref{thm:conv-alg-s}, considering that in this case we have $a_n^{(m+1)} = 1$ for any $n \geq 0$ and the  Browkin function $s$ satisfies the hypotheses of Theorem \ref{thm:conv-alg-s} by Propostion \ref{prop:sproperties}.
\end{proof}

The next proposition shows the uniqueness of the expansion of a $m$--tuple in MCF satisfying \eqref{eq:cond-conv-v}.

\begin{proposition}
Consider a MCF convergent to the $m$-tuple $(\alpha^{(1)},\ldots, \alpha^{(m)})\in\QQ_p^m$. Suppose that the partial quotients satisfy conditions \eqref{eq:cond-conv-v} and belong to $\mathbb Z[\frac{1}{p}] \cap \left(-\frac{p}{2}, \frac{p}{2} \right)$. Then they are obtained by applying the algorithm \eqref{eq:alg}.
\end{proposition}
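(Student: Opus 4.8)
The plan is to prove that $a_n^{(i)}=s(\alpha_n^{(i)})$ for all $n\geq0$ and $i=1,\ldots,m$, where the $\alpha_n^{(i)}$ are the complete quotients of the given MCF and $\alpha_0^{(i)}=\alpha^{(i)}$. Since $a_n^{(m+1)}=1$, the relations \eqref{eq:alg-mcf} satisfied by the $\alpha_n^{(i)}$ are literally the recursion in the algorithm \eqref{eq:alg}; hence, once these equalities are known, the algorithm run on $(\alpha^{(1)},\ldots,\alpha^{(m)})$ reproduces step by step exactly the $\alpha_n^{(i)}$ and the $a_n^{(i)}$, which is the assertion. Now each $a_n^{(i)}$ belongs to $\ZZ[\frac1p]\cap(-\frac p2,\frac p2)=s(\QQ_p)$ (Proposition \ref{prop:sproperties}$(f)$), so $s(a_n^{(i)})=a_n^{(i)}$, and by Proposition \ref{prop:sproperties}$(b)$ the equality $s(\alpha_n^{(i)})=s(a_n^{(i)})$ holds if and only if $\alpha_n^{(i)}-a_n^{(i)}\in p\ZZ_p$. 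Thus the whole statement reduces to proving $|\alpha_n^{(i)}-a_n^{(i)}|<1$ for every $n$ and $i$.

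Fix $n$ and let $\beta_n^{(i)}$ be the value of the tail MCF with partial quotients $(a_{n+k}^{(i)})_{k\geq0}$; since this tail still satisfies \eqref{eq:cond-conv-v}, Theorem \ref{teo:convergence} guarantees that its convergents $Q_k^{[n],(i)}$ converge to $\beta_n^{(i)}$. Its $0$-th convergent is $Q_0^{[n],(i)}=a_n^{(i)}$, so
$$\beta_n^{(i)}-a_n^{(i)}=\lim_{k\to\infty}\sum_{l=1}^{k}\bigl(Q_l^{[n],(i)}-Q_{l-1}^{[n],(i)}\bigr).$$
A direct computation gives $Q_1^{[n],(i)}-Q_0^{[n],(i)}=a_{n+1}^{(i+1)}/a_{n+1}^{(1)}$, whose norm is $<1$ by \eqref{eq:cond-conv-v} (for $i=m$ note $a_{n+1}^{(m+1)}=1$); for $l\geq2$ the estimate $|a_{k+1}^{(j)}A_{k+1-j}^{(m+1)}/A_{k+1}^{(m+1)}|<1$ for $j\geq2$, obtained through Lemma \ref{lem:norme} in the proof of Theorem \ref{teo:convergence}, together with the ultrametric inequality shows inductively that every gap satisfies $|Q_l^{[n],(i)}-Q_{l-1}^{[n],(i)}|<1$ (the finitely many boundary indices $l\leq m$ being settled by the same direct expansion as the case $l=1$). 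Each partial sum then lies in the closed set $p\ZZ_p$, hence so does the limit: $\beta_n^{(i)}-a_n^{(i)}\in p\ZZ_p$.

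It remains to identify $\beta_n^{(i)}$ with $\alpha_n^{(i)}$. From $\mathcal{B}_l^{[n]}=\mathcal{A}_n\,\mathcal{B}_{l-1}^{[n+1]}$, the convergents of the tail at level $n$ are the image of those at level $n+1$ under the homography attached to $\mathcal{A}_n$; letting $l\to\infty$ and using $|\beta_{n+1}^{(1)}|=|a_{n+1}^{(1)}|>1\neq0$ (from the previous paragraph), this homography is continuous at the limit point and yields $\beta_n^{(i)}=a_n^{(i)}+\beta_{n+1}^{(i+1)}/\beta_{n+1}^{(1)}$. Hence the $\beta_n^{(i)}$ obey the relations \eqref{eq:mcf}, equivalently the forward recursion \eqref{eq:alg-mcf}, just as the $\alpha_n^{(i)}$ do, and they share the level-$0$ value $\beta_0^{(i)}=\alpha^{(i)}=\alpha_0^{(i)}$; by determinism of \eqref{eq:alg-mcf} we conclude $\beta_n^{(i)}=\alpha_n^{(i)}$ for all $n$ and $i$. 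Combining with the previous paragraph gives $\alpha_n^{(i)}-a_n^{(i)}\in p\ZZ_p$, as required.

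The crux is the mutual dependence between consecutive levels: the exact identity $\alpha_n^{(i)}-a_n^{(i)}=\alpha_{n+1}^{(i+1)}/\alpha_{n+1}^{(1)}$ makes the smallness of the difference at level $n$ equivalent to the norm conditions on the complete quotients at level $n+1$, so no level can be controlled in isolation and a naive induction would be circular. The device that breaks the circularity is the passage to the tails combined with $p$-adic convergence: the limit of the tail convergents produces $\alpha_n^{(i)}$ and the bound $|\alpha_n^{(i)}-a_n^{(i)}|<1$ in one stroke, with the closedness of $p\ZZ_p$ upgrading the gapwise estimates to the required congruence. This is precisely where the ultrametric setting is more forgiving than the Archimedean one.
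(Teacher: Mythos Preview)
Your proof is correct and shares the same skeleton as the paper's: reduce to showing $|\alpha_n^{(i)}-a_n^{(i)}|<1$, then invoke Proposition~\ref{prop:sproperties}(b) (and the fact that $a_n^{(i)}\in s(\QQ_p)$) to conclude $s(\alpha_n^{(i)})=a_n^{(i)}$.

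The difference lies in how the norm inequality is justified. The paper simply asserts that $|\alpha_n^{(1)}|=|a_n^{(1)}|$ and $|\alpha_n^{(i)}|<|\alpha_n^{(1)}|$ for $i\geq 2$, and reads off $|\alpha_{n+1}^{(i+1)}/\alpha_{n+1}^{(1)}|<1$ from that; but those asserted norm relations are essentially equivalent to what is being proved, so the paper's argument is rather elliptical. You instead realise the complete quotients as limits $\beta_n^{(i)}$ of the tail MCFs, bound each gap $|Q_l^{[n],(i)}-Q_{l-1}^{[n],(i)}|<1$ using Lemma~\ref{lem:norme} and the computation in Theorem~\ref{teo:convergence} (with the small-$l$ boundary cases handled directly), pass to the limit inside the closed set $p\ZZ_p$, and finally identify $\beta_n^{(i)}=\alpha_n^{(i)}$ via the determinism of \eqref{eq:alg-mcf}. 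This is longer but genuinely non-circular, and it has the pleasant side effect of making explicit why the tail limits exist and coincide with the complete quotients produced by running the forward recursion from $(\alpha^{(1)},\ldots,\alpha^{(m)})$. One small remark: your parenthetical ``settled by the same direct expansion as the case $l=1$'' is doing real work---for $2\leq l\leq m$ the expansion of $Q_l-Q_{l-1}$ picks up extra terms $a_l^{(j)}\delta_{i,j-l}/A_l^{(m+1)}$ from the negative-index $A$'s, and one checks these also have norm $<1$ because $|A_l^{(m+1)}|=\prod_{h=1}^l|a_h^{(1)}|>|a_l^{(j)}|$; it would be worth writing that line out.
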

\begin{proof} We have to prove that $s(\alpha_n^{(i)})=a_n^{(i)}$ for $n\geq 0$ and $i=1,\ldots, m$. Since $s(a_n^{(i)})=a_n^{(i)}$, by Proposition \ref{prop:sproperties} $a)$ it suffices to show that $|\alpha_n^{(i)}-a_n^{(i)}|<1$, that is $\left |\frac {\alpha_{n+1}^{(i+1)}}{\alpha_{n+1}^{(1)}}\right |<1$ (where we put $\alpha_n^{(m+1)}=1$). This is true observing that $|\alpha_n^{(1)}| = |a_n^{(1)}| \geq 1$ and $|\alpha_n^{(i)}| < |\alpha_n^{(1)}|$, for $i=2, \ldots, m$.
\end{proof}

In the next proposition, we start to study the finiteness of the $p$--adic Jacobi--Perron algorithm proving that when it stops, then the input $m$-tuple consists of elements that are $\QQ$-linearly dependent. In the next section, we will prove that starting with a $m$-tuple of rational numbers, then the algorithm always stops in a finite number of steps.

\begin{proposition} \label{prop:lin-dip}
If the $p$--adic Jacobi--Perron algorithm stops in a finite number of steps when processes the $m$--tuple $\alpha^{(1)},\ldots , \alpha^{(m)} \in \mathbb Q_p$, then $1,\alpha^{(1)},\ldots , \alpha^{(m)}$ are $\QQ$-linearly dependent. 
\end{proposition}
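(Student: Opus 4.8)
The plan is to understand what "the algorithm stops" means and to translate the stopping condition into a linear dependence relation over $\QQ$. Looking at the recursion \eqref{eq:alg}, each step requires dividing by $\alpha_n^{(m)} - a_n^{(m)} = \alpha_n^{(m)} - s(\alpha_n^{(m)})$. By Proposition \ref{prop:sproperties}, $s(\alpha) = \alpha$ happens precisely when $\alpha \in p\ZZ_p$ has $s(\alpha)=\alpha$, i.e. when $\alpha - s(\alpha) = 0$; more to the point, the only way the algorithm can fail to proceed at step $n$ is that the denominator $\alpha_n^{(m)} - a_n^{(m)}$ vanishes, so that $\alpha_{n+1}^{(1)}$ is undefined. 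Hence "stops at step $N$" means $\alpha_N^{(m)} = a_N^{(m)} = s(\alpha_N^{(m)}) \in \QQ$. I would take this as the precise meaning of termination and use it as the anchor of the argument.

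First I would record the key formula \eqref{eq:alpha0}, which expresses $\alpha_0^{(i)}$ as a ratio of $\QQ$-linear combinations of the $A^{(i)}_{n-j}$ with coefficients $\alpha_n^{(j)}$. Evaluating this at the stopping index $n = N$ and using $\alpha_N^{(m+1)} = a_N^{(m+1)} = 1$ together with the termination condition $\alpha_N^{(m)} = a_N^{(m)} \in \QQ$, the point is that \emph{all} the complete quotients appearing in the numerator and denominator of \eqref{eq:alpha0} at level $N$ become rational. Indeed, when the algorithm stops at step $N$, the complete quotients $\alpha_N^{(1)}, \ldots, \alpha_N^{(m)}$ are exactly the last partial quotients $a_N^{(1)}, \ldots, a_N^{(m)}$ (this is the finite-MCF convention recorded in the second Remark of Section \ref{sec:formal}), hence they lie in $\QQ$. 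Since the $A^{(i)}_{j}$ are built from the partial quotients $a_j^{(i)} = s(\alpha_j^{(i)}) \in \QQ$ through the recursion, they are rational as well.

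The heart of the argument is then the following. Rewrite \eqref{eq:alpha0} at $n=N$ as a single linear relation by clearing the denominator:
\begin{equation*}
\alpha_0^{(i)} \cdot \Bigl( \sum_{j=1}^{m+1} \alpha_N^{(j)} A^{(m+1)}_{N-j} \Bigr) = \sum_{j=1}^{m+1} \alpha_N^{(j)} A^{(i)}_{N-j}, \qquad i = 1, \ldots, m.
\end{equation*}
All the quantities $\alpha_N^{(j)}$ and $A^{(i)}_{N-j}$ are rational, so each of these $m$ equations expresses $\alpha_0^{(i)} = \alpha^{(i)}$ as a ratio of two rational numbers, giving $\alpha^{(i)} \in \QQ$ directly, provided the common denominator $D := \sum_{j=1}^{m+1} \alpha_N^{(j)} A^{(m+1)}_{N-j}$ is nonzero. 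Thus $1, \alpha^{(1)}, \ldots, \alpha^{(m)}$ are all rational and a fortiori $\QQ$-linearly dependent.

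The main obstacle I anticipate is the degenerate possibility $D = 0$, i.e. that the denominator of \eqref{eq:alpha0} vanishes at the stopping index; one must either rule this out (using that $|A^{(m+1)}_{N-1}| = \prod_{h=1}^{N-1}|a_h^{(1)}|$ dominates by Lemma \ref{lem:norme} and the convergence conditions \eqref{eq:cond-conv-v}, which force the sum to be nonzero by the ultrametric inequality exactly as in the proof of Theorem \ref{thm:conv-alg-s}) or else, if $D=0$ cannot be excluded, argue separately that the numerator vanishes simultaneously and extract the linear relation among $1, \alpha^{(1)}, \ldots, \alpha^{(m)}$ directly from the cleared equation $\sum_j \alpha_N^{(j)}(A^{(i)}_{N-j} - \alpha^{(i)} A^{(m+1)}_{N-j}) = 0$. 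In fact the cleanest route may be to avoid the division altogether: the cleared relation holds with rational coefficients and, reading it as a statement about the vector $(1, \alpha^{(1)}, \ldots, \alpha^{(m)})$, produces a nontrivial $\QQ$-linear dependence whatever the value of $D$. I would therefore present the argument in the cleared form to sidestep the nonvanishing issue entirely.
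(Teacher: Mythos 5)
There is a genuine gap, and it sits at the anchor of your argument: the claim that when the algorithm stops at step $N$, \emph{all} the complete quotients $\alpha_N^{(1)},\ldots,\alpha_N^{(m)}$ equal the partial quotients $a_N^{(1)},\ldots,a_N^{(m)}$ and hence are rational. The stopping condition only gives $\alpha_N^{(m)}=a_N^{(m)}$ (the denominator that vanishes); it says nothing about $\alpha_N^{(i)}$ for $i<m$. The Remark you cite concerns the \emph{formal} finite MCF built from a finite list of partial quotients, where $\alpha_r^{(i)}:=a_r^{(i)}$ is a convention; it does not apply to the complete quotients computed by the algorithm from a given input. The paper's own example right after the proposition shows this concretely: for the input $\bigl(1+\tfrac{p}{p^2+1},\,1-\tfrac{p}{p^2+1}\bigr)$ the algorithm stops at step $1$ with $\alpha_1^{(2)}=a_1^{(2)}=-1$ but $\alpha_1^{(1)}=-\tfrac{p^2+1}{p}\neq a_1^{(1)}=-\tfrac1p$ (this mismatch is precisely why the finite MCF need not converge back to the input). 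Consequently your cleared relation $\sum_j\alpha_N^{(j)}\bigl(A^{(i)}_{N-j}-\alpha^{(i)}A^{(m+1)}_{N-j}\bigr)=0$ has coefficients $\alpha_N^{(j)}$ that are general $p$--adic numbers, not rationals, so it is not a $\QQ$--linear dependence. Worse, your intended conclusion proves too much: you deduce $\alpha^{(i)}\in\QQ$ for every $i$, which is false — the paper's example $\bigl(\sqrt[3]{2},\tfrac54\bigr)$ terminates even though $\sqrt[3]{2}\notin\QQ$ (the tuple is $\QQ$--dependent only because $\tfrac54$ and $1$ already are).

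The statement genuinely asserts only a single nontrivial $\QQ$--linear relation, and the proof has to produce exactly one. The paper does this by induction on the stopping index $N$: the shifted tuple $\alpha_1^{(1)},\ldots,\alpha_1^{(m)}$ stops at step $N-1$, so by induction it satisfies $c_1\alpha_1^{(1)}+\cdots+c_m\alpha_1^{(m)}+c_{m+1}=0$ with $c_j\in\QQ$; substituting $\alpha_1^{(1)}=\tfrac{1}{\alpha^{(m)}-a_0^{(m)}}$ and $\alpha_1^{(i+1)}=\tfrac{\alpha^{(i)}-a_0^{(i)}}{\alpha^{(m)}-a_0^{(m)}}$ and clearing the single denominator yields a $\QQ$--relation among $1,\alpha^{(1)},\ldots,\alpha^{(m)}$. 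Your matrix-flavoured idea can be repaired in the same spirit without induction: since $\det\mathcal{B}_{N-1}=\pm1$, the inverse matrix $\mathcal{B}_{N-1}^{-1}$ has rational entries and expresses $(\alpha_N^{(1)}:\cdots:\alpha_N^{(m)}:1)$ projectively in terms of $(\alpha^{(1)}:\cdots:\alpha^{(m)}:1)$; the single condition $\alpha_N^{(m)}=a_N^{(m)}\in\QQ$ then reads as (row $m$ minus $a_N^{(m)}$ times row $m+1$ of $\mathcal{B}_{N-1}^{-1}$) applied to $(\alpha^{(1)},\ldots,\alpha^{(m)},1)$, a nontrivial rational relation because the rows of an invertible matrix are independent. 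Either way, the fix is to use only the one piece of rationality the stopping condition actually provides.
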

\begin{proof} Assume that the iteration of equations \eqref{eq:alg} stops at the step $n$. We prove the thesis by induction on $n$.
If $n=0$ then $\alpha^{(m)}=a_0^{(m)}\in\QQ$. Now, suppose that the claim is true for every stopping at steps $m<n$. Notice that the $p$--adic Jacobi--Perron algorithm stops at the step $n-1$ when applied to the $m$-tuple $\alpha^{(1)}_1,\ldots,\alpha^{(m)}_1$. Thus, by inductive hypothesis there is a relation
$$c_1\alpha^{(1)}_1+\ldots +c_m\alpha^{(m)}_1+c_{m+1}=0$$  
for some $c_1,\ldots, c_{m+1}\in\QQ$.
By replacing $\alpha^{(1)}_1=\frac 1 {\alpha^{(m)}-a_0^{(m)}}$ and $\alpha^{(i+1)}_1=\frac {\alpha^{(i)}-a_0^{(i)}} {\alpha^{(m)}-a_0^{(m)}}$
we get
$$c_1+c_2(\alpha^{(1)}-a_0^{(1)})+\ldots + c_m(\alpha^{(m-1)}-a_0^{(m-1)})+c_{m+1}(\alpha^{(m)}-a_0^{(m)})=0.$$
\end{proof}

We can observe that there are situations where the $p$--adic Jacobi--Perron algorithm provides a finite MCF that does not converge to the input $m$--tuple, but it converges to some other $m$--tuple of rationals. We provide a simple example of this situation.

\begin{example}
In the case $m=2$ consider
$$\alpha^{(1)}_0= 1+\frac p {p^2+1}, \quad \alpha^{(2)}_0= 1-\frac p {p^2+1}.$$
Then the equations \eqref{eq:alg} produce recursively
\begin{eqnarray*}  &a^{(1)}_0= 1,\quad\quad & a^{(2)}_0 =1\\
& \alpha^{(1)}_1=-\frac{p^2+1}{p},& \alpha^{(2)}_1=-1\\
 & a^{(1)}_1=- \frac 1 p,\quad & a^{(2)}_1 =-1\\
\end{eqnarray*}
Since $a^{(2)}_1=\alpha^{(2)}_1$, the $p$--adic Jacobi--Perron algorithm stops for the input $(\alpha^{(1)}_0,\alpha^{(2)}_0)$, returning the two following sequences of partial quotients:
$$ (1,-\frac 1 p),\quad (1,-1).$$
However, this finite MCF converges to the values
$$1+\frac {-1}{-\frac 1 p}=1+p,\quad 1+\frac {1}{-\frac 1 p}=1-p.$$
\end{example}

\subsection{The generalized p--adic Euclidean algorithm}

It is well--known that the classical algorithm for determining the continued fraction expansion of a real number can be derived by the Euclidean algorithm. Indeed, if we consider $x_0 \geq x_1 >0$ we can write $x_0 = a_0 x_1 + x_2$, where $a_0 = [x_0 / x_1]$ and $0 \leq x_2 < x_1$. Iterating the procedure we get the following sequence of equations
$$x_i = a_i x_{i+1} + x_{i+2}, \quad i = 0, 1, 2, ...$$
where $a_i =[x_i / x_{i+1}]$. If we set $\alpha_i = \frac{x_i}{x_{i+1}}$, we obtain the continued fraction of $\alpha_0$, since
$$\cfrac{x_i}{x_{i+1}} = a_i + \cfrac{x_{i+2}}{x_{i+1}}$$
and consequently we get
$$a_i = [\frac{x_i}{x_{i+1}}] = [\alpha_i], \quad \alpha_{i+1} = \cfrac{1}{\alpha_i - a_i}.$$
Similarly, the Jacobi--Perron algorithm can be obtained from the generalized Euclidean algorithm (see, e.g., \cite{Bry}).
In this section, we see that also the $p$--adic MCFs can be introduced starting from a generalized Euclidean algorithm defined in $\mathbb Q_p$. For this purpose, we need to define a $p$--adic Euclidean algorithm. In \cite{Lag}, the author introduced a $p$--adic division algorithm that we recall in the next theorem.

\begin{theorem}\label{teo:p-adicDivisionAlgorithm} Let $p$ be an odd prime, given given any $\sigma$ and $\tau$ in $\QQ_p$, where $\tau\not=0$, there exists uniquely $q\in\ZZ[\frac 1 p]$ with $|q|_\infty<\frac p 2$ and $\eta\in\QQ_p$ with $|\eta|<|\tau|$ such that 
$$\sigma=q\tau+\eta.$$
\end{theorem}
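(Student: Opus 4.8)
The plan is to realize the quotient $q$ as the value of the Browkin function $s$ of Definition~\ref{def:browkins} applied to the ratio $\sigma/\tau$, and then to read off every required property from Proposition~\ref{prop:sproperties}.

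For existence, I would set $q = s(\sigma/\tau)$ and $\eta = \sigma - q\tau$. By part $f)$ of Proposition~\ref{prop:sproperties} we have $q \in \ZZ[\frac 1 p] \cap (-\frac p 2, \frac p 2)$, which gives simultaneously $q \in \ZZ[\frac 1 p]$ and $|q|_\infty < \frac p 2$. The crucial point is the estimate on the remainder: writing $\gamma = \sigma/\tau = \sum_{j \geq k} x_j p^j$ as in Definition~\ref{def:browkins}, one has $\gamma - s(\gamma) = \sum_{j \geq 1} x_j p^j \in p\ZZ_p$, so that $|\gamma - s(\gamma)| \leq \frac 1 p < 1$ (this is essentially a restatement of Proposition~\ref{prop:sproperties} $b)$). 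Since $\eta = \tau(\gamma - q) = \tau(\gamma - s(\gamma))$, multiplicativity of the $p$-adic norm yields $|\eta| = |\tau|\,|\gamma - s(\gamma)| \leq \frac{|\tau|}{p} < |\tau|$, exactly as required.

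For uniqueness, suppose $\sigma = q\tau + \eta = q'\tau + \eta'$ with $q, q' \in \ZZ[\frac 1 p]$ satisfying $|q|_\infty, |q'|_\infty < \frac p 2$ and with $|\eta|, |\eta'| < |\tau|$. Then $(q - q')\tau = \eta' - \eta$, and the ultrametric inequality gives $|q - q'|\,|\tau| = |\eta' - \eta| \leq \max\{|\eta|, |\eta'|\} < |\tau|$, whence $|q - q'| < 1$. The element $q - q'$ lies in $\ZZ[\frac 1 p]$ and has $p$-adic norm strictly less than $1$, so $q - q' \in \ZZ[\frac 1 p] \cap p\ZZ_p$. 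Invoking the identity $\ZZ[\frac 1 p] \cap p\ZZ_p = p\ZZ$ established inside the proof of Proposition~\ref{prop:sproperties} $f)$, the difference is an integer multiple of $p$; if it were nonzero it would satisfy $|q - q'|_\infty \geq p$, contradicting $|q - q'|_\infty \leq |q|_\infty + |q'|_\infty < p$. Hence $q = q'$, and then $\eta = \sigma - q\tau = \sigma - q'\tau = \eta'$.

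I expect the genuine content to be concentrated in two observations: that $s$ always produces a remainder $\gamma - s(\gamma)$ of $p$-adic norm at most $\frac 1 p$ (immediate from the definition of $s$), and the arithmetic fact $\ZZ[\frac 1 p] \cap p\ZZ_p = p\ZZ$, which is what forces the two admissible quotients to coincide by pitting their $p$-adic size against their Euclidean size. Everything else is a direct transcription of the properties of the Browkin function, so I anticipate no further obstacle.
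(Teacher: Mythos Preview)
Your argument is correct in both the existence and uniqueness parts. Note, however, that the paper does not actually supply a proof of this theorem: it is quoted from \cite{Lag} and merely stated. The paper does, immediately after the statement, observe that the quotient $q$ coincides with $s(\sigma/\tau)$ (arguing via Proposition~\ref{prop:sproperties}~$b)$), which is precisely your construction; so your existence argument matches the intended one, and your uniqueness argument---pitting the $p$-adic smallness of $q-q'$ against its Euclidean smallness via $\ZZ[\frac 1 p]\cap p\ZZ_p=p\ZZ$---fills in what the paper leaves to the reference.
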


\begin{remark}
This construction has been also used in \cite{Err} considering different restrictions on the quotient and remainder. 
\end{remark}

Let us note that the quotient $q$ in the previous theorem coincide with $s(\sigma / \tau)$. Indeed, from $\frac{\sigma}{\tau} = q + \frac{\eta}{\tau}$ with $\lvert \frac{\eta}{\tau} \rvert < 1$, we obtain by Proposition \ref{prop:sproperties}, property (b), that
$$s\left( \frac{\sigma}{\tau} \right) = s\left( q + \frac{\eta}{\tau} \right) = s(q) = q.$$
In other words, the function $s$ has the same role as the floor function in the classical division algorithm.

Starting by $\eta_0, \eta_1 \in \QQ_p$, with $\eta_1\not=0$, one can iteratively apply Theorem \ref{teo:p-adicDivisionAlgorithm} in order to get the equations:
\begin{equation*}
\eta_i = q_i \eta_{i+1} + \eta_{i+2}, \quad i = 0, 1, 2, ...
\end{equation*}
with $q_i \in \ZZ[\frac 1 p]$ such that $|q_i|_\infty<\frac p 2$ and $\eta_i, \in\QQ_p$ with $|\eta_i|<|\eta_{i-1}|$. In other words, we get a $p$--adic Euclidean algorithm that either continues indefinitely or stops when $\eta_i=0$ for some $i$. 

Given a $(m+1)$--tuple $x_0^{(1)}, \ldots, x_0^{(m+1)} \in \mathbb Q_p$, we can use the $p$--adic Euclidean algorithm to obtain

\begin{equation}\label{eq:xdue}
x_n^{(i-1)}=a_n^{(i-1)}x_n^{(m+1)}+x_{n+1}^{(i)},
\end{equation}
and $x_{n+1}^{(1)} = x_n^{(m+1)}$, with $i = 2, \ldots, m$, $n = 0, 1, \ldots$, where $a_n^{(i)}=s\left(\frac {x_n^{(i)}}{x_n^{(m+1)}}\right).$ This equations yields to a generalized $p$--adic Euclidean algorithm.
Hence, if we put 
$$\alpha_n^{(i)}=\frac {x_n^{(i)}}{x_n^{(m+1)}},\quad i=1,\ldots, m$$
the $p$--adic Jacobi--Perron algorithm \eqref{eq:alg} 
can be described by the following rules
\begin{equation}\label{eq:Jacobimult}
\left\{ \begin{array}{lll}
 x^{(1)}_{n+1}&=& x^{(m+1)}_n\\
 x_{n+1}^{(i)}&=& x_n^{(i-1)}-a_n^{(i-1)}x_n^{(m+1)}\quad\hbox{for } i=2,\ldots, m+1
 \end{array}
\right .
\end{equation}
In other words, applying the generalized $p$--adic Euclidean algorithm to a $(m+1)$--tuple $x_0^{(1)}, \ldots, x_0^{m+1} \in \mathbb Q_p$ is equivalent to apply the $p$--adic Jacobi--Perron algorithm to the $m$--tuple $\frac{x_0^{(1)}}{x_0^{m+1}}, \ldots, \frac{x_m^{(1)}}{x_0^{m+1}}$. On the other hand, applying the $p$--adic Jacobi--Perron algorithm to a $m$--tuple $\alpha_0^{(1)}, \ldots, \alpha_0^{(m)} \in \mathbb Q_p$ is equivalent to apply the generalized $p$--adic Euclidean algorithm to the $(m+1)$--tuple $\alpha_0^{(1)}, \ldots, \alpha_0^{(m)}, 1$.
Exploiting these considerations, we can prove that the $p$--adic Jacobi--Perron algorithm terminates in a finite number of steps when a $m$--tuple of rational numbers is given as input.

\begin{theorem} Given $\alpha_0^{(1)},\ldots , \alpha_0^{(m)}\in\QQ$ as inputs for the $p$--adic Jacobi--Perron algorithm, then it terminates in a finite number of steps.
\end{theorem}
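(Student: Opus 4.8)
The plan is to exploit the equivalence, just established, between the $p$--adic Jacobi--Perron algorithm on $(\alpha_0^{(1)},\ldots,\alpha_0^{(m)})$ and the generalized $p$--adic Euclidean algorithm on the $(m+1)$--tuple $\mathbf{x}_0=(\alpha_0^{(1)},\ldots,\alpha_0^{(m)},1)\in\QQ^{m+1}$, and to attach to the running tuples a positive integer that is eventually strictly decreasing. Write $\mathbf{x}_n=(x_n^{(1)},\ldots,x_n^{(m+1)})$ for the tuple at step $n$, governed by \eqref{eq:Jacobimult}. The crucial remark is that the algorithm depends only on the ratios $\alpha_n^{(i)}=x_n^{(i)}/x_n^{(m+1)}$ (these determine the quotients $a_n^{(i)}=s(\alpha_n^{(i)})$), so it is invariant under rescaling each $\mathbf{x}_n$ by a nonzero rational. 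Since $\mathbf{x}_0$ is rational, I may therefore replace $\mathbf{x}_n$ at every step by its primitive integer representative $\mathbf{z}_n\in\ZZ^{m+1}$, with $\gcd(z_n^{(1)},\ldots,z_n^{(m+1)})=1$, and set $N_n=\max_i|z_n^{(i)}|_\infty\in\ZZ_{\geq1}$, where $|\cdot|_\infty$ is the archimedean absolute value. The goal is to show that $(N_n)$ eventually strictly decreases; being a sequence of positive integers, this forces the process to stop after finitely many steps.

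Encode one step of \eqref{eq:Jacobimult} as $\mathbf{x}_{n+1}=M_n\mathbf{x}_n$ with
\[
M_n=\begin{pmatrix}0&0&\cdots&0&1\\ 1&0&\cdots&0&-a_n^{(1)}\\ 0&1&\cdots&0&-a_n^{(2)}\\ \vdots&&\ddots&&\vdots\\ 0&0&\cdots&1&-a_n^{(m)}\end{pmatrix},
\]
so that $\det M_n=\pm1$ and, by Proposition \ref{prop:sproperties}(f), all entries lie in $\ZZ[\tfrac1p]$. First I would pin down the content needed to primitivize $\mathbf{y}_{n+1}:=M_n\mathbf{z}_n$. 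At every prime $\ell\neq p$ the matrix $M_n$ lies in $GL_{m+1}(\ZZ_\ell)$, hence preserves $\ell$--primitivity and contributes no $\ell$--content. At $p$, the recycled coordinate $y_{n+1}^{(1)}=z_n^{(m+1)}$ has valuation exactly $t_n:=v_p(z_n^{(m+1)})$, whereas for $i\geq2$ the coordinate $y_{n+1}^{(i)}=z_n^{(i-1)}-a_n^{(i-1)}z_n^{(m+1)}$ is a division remainder, so Theorem \ref{teo:p-adicDivisionAlgorithm} gives $v_p(y_{n+1}^{(i)})>t_n$. Thus $\min_i v_p(y_{n+1}^{(i)})=t_n$, the primitivizing content is exactly $p^{t_n}$, and $\mathbf{z}_{n+1}=p^{-t_n}\mathbf{y}_{n+1}$.

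The decisive point, which I expect to be the main obstacle, is that $t_n\geq1$ for all $n\geq1$: the last coordinate $y_n^{(m+1)}=z_{n-1}^{(m)}-a_{n-1}^{(m)}z_{n-1}^{(m+1)}$ is itself a remainder, so $v_p(y_n^{(m+1)})>t_{n-1}$, and dividing by $p^{t_{n-1}}$ yields $v_p(z_n^{(m+1)})=v_p(y_n^{(m+1)})-t_{n-1}>0$. Combining this with the archimedean bound $|a_n^{(i)}|_\infty<p/2$ from Proposition \ref{prop:sproperties}(f), I obtain, for $n\geq1$,
\[
N_{n+1}=p^{-t_n}\max_i|y_{n+1}^{(i)}|_\infty\leq p^{-t_n}\Bigl(1+\tfrac p2\Bigr)N_n\leq\Bigl(\tfrac12+\tfrac1p\Bigr)N_n.
\]
Here the odd--prime hypothesis does exactly the work needed: $\tfrac12+\tfrac1p\leq\tfrac56<1$, so $N_n\to0$, contradicting $N_n\geq1$ unless the algorithm has already terminated. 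Hence the $p$--adic Jacobi--Perron algorithm stops in finitely many steps on rational input.
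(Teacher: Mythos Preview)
Your argument is correct. Both your proof and the paper's use the Euclidean formulation \eqref{eq:Jacobimult} together with the key archimedean bound $|a_n^{(i)}|_\infty<p/2$ from Proposition~\ref{prop:sproperties}(f), but the bookkeeping differs. The paper tracks only the last coordinate $x_n^{(m+1)}$: by iterating \eqref{eq:Jacobimult} it derives an $(m+1)$--term linear recurrence
\[
x_{n+1}^{(m+1)}=x_{n-m}^{(m+1)}-\sum_{j=1}^m a_{n-(m-j)}^{(j)}\,x_{n-(m-j)}^{(m+1)},
\]
writes $x_n^{(m+1)}=p^{e_n}h_n$ with $h_n\in\ZZ$ prime to $p$ and $e_n$ strictly increasing, and then shows $|h_{n+1}|_\infty<\max_{n-m\le j\le n}|h_j|_\infty$ via the estimate $\tfrac{1}{p^{m+1}}+\tfrac{1}{2}\sum_{k=0}^{m-1}p^{-k}<1$. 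You instead primitivize the full tuple at each step, identify the content of $M_n\mathbf{z}_n$ as exactly $p^{t_n}$ by a local argument at every prime, and obtain a direct one--step contraction $N_{n+1}\le(\tfrac{1}{2}+\tfrac{1}{p})N_n$ for $n\ge1$. This is tidier---no unrolling of the recursion over $m$ steps is needed---and it yields an explicit termination bound (roughly $\log N_1/\log\bigl(\tfrac12+\tfrac1p\bigr)^{-1}$ steps after the first) that the paper's inequality does not give so directly.
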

\begin{proof} 
Since $\alpha_0^{(1)},\ldots , \alpha_0^{(m)}\in\QQ$, we can write them as $\alpha_0^{(i)}= \frac{n_0^{(i)} }{d_0^{(i)}}$, with $n_0^{(i)}, d_0^{(i)}\in\ZZ$ coprimes, for any $i = 1, ..., m$. Considering $\ell = lcm\{d_0^{(1)},\ldots , d_0^{(m)}\}$, we can write $\alpha_0^{(i)} = \cfrac{n_0^{(i)}\cdot \frac{\ell}{d_0^{(i)}}}{\ell}$ and we define the integer numbers
$$x_0^{(m+1)} = \ell, \quad x_0^{(i)} = n_0^{(i)} \cdot \frac{\ell}{d_0^{(i)}}, \quad i = 1, \ldots, m.$$
Thus, applying the $p$--adic Jacobi Perron algorithm to $\alpha_0^{(1)},\ldots , \alpha_0^{(m)}$ is equivalent to apply the generalized $p$--adic Euclidean algorithm to $x_0^{(1)}, \ldots, x_0^{(m+1)}$, which will produce the sequences $(x_n^{(1)})_{n \geq 1}, \ldots, (x_n^{(m+1)})_{n \geq 1}$ whose elements are all in $\mathbb Z\left[ \frac{1}{p} \right]$.  Now, we prove that in this case the generalized $p$--adic Euclidean algorithm terminates in a finite number of steps.
From equations (\ref{eq:Jacobimult}) we get
\begin{eqnarray*}
x_{n+1}^{(m+1)} &=& x_n^{(m)} -a_n^{(m)}x_n^{(m+1)}\\
&=& x_{n-1}^{(m-1)} -a_{n-1}^{(m-1)}x_{n-1}^{(m+1)} -a_n^{(m)}x_n^{(m+1)}\\
&=& x_{n-2}^{(m-2)} -a_{n-2}^{(m-2)}x_{n-2}^{(m+1)}  -a_{n-1}^{(m-1)}x_{n-1}^{(m+1)} -a_n^{(m)}x_n^{(m+1)}\\
&\vdots & \\
&=& x^{(1)}_{n-(m-1)}- a^{(1)}_{n-(m-1)}x^{(m+1)}_{n-(m-1)}-a^{(2)}_{n-(m-2)}x^{(m+1)}_{n-(m-2)}-\ldots -a_n^{(m)}x_n^{(m+1)}\\
&=& x^{(m+1)}_{n-m}- a^{(1)}_{n-(m-1)}x^{(m+1)}_{n-(m-1)}-a^{(2)}_{n-(m-2)}x^{(m+1)}_{n-(m-2)}-\ldots -a_n^{(m)}x_n^{(m+1)}
\end{eqnarray*}
We can write $x^{(m+1)}_n= p^{e_n}h_n$ with $e_n, h_n\in\ZZ$, $p\not | h_n$ and by equation (\ref{eq:xdue}) we have $|x^{(m+1)}_{n+1}|<|x^{(m+1)}_{n}|$ for every $n$, so that $e_{n+1}>e_n$ for every $n$. In particular $e_{n+k} -e_n\geq k$, for every $n,k$. 
Dividing by $p^{e_{n+1}}$ the above equality we obtain
\begin{equation*}\label{eq:eq}
h_{n+1}=\frac{h_{n-m}}{p^{e_{n+1}-e_{n-m}}}- a^{(1)}_{n-(m-1)}\frac{h_{n-(m-1)}}{p^{e_{n+1}-e_{n-(m-1)}}}- a^{(2)}_{n-(m-2)}\frac{h_{n-(m-2)}}{p^{e_{n+1}-e_{n-(m-2)}}}-\ldots - a^{(m)}_{n}\frac{h_{n}}{p^{e_{n+1}-e_{n}}}
\end{equation*}
Moreover, recalling that $|a_n^{(i)}|_\infty < \frac p 2$ for every $i,n$, we have
\begin{eqnarray*}
|h_{n+1}|_\infty &\leq & \frac{|h_{n-m}|_\infty }{p^{e_{n+1}-e_{n-m}}}+|a^{(1)}_{n-(m-1)}|_\infty \frac{|h_{n-(m-1)}|_\infty }{p^{e_{n+1}-e_{n-(m-1)}}}+ |a^{(2)}_{n-(m-2)}|_\infty \frac{|h_{n-(m-2)}|_\infty} {p^{e_{n+1}-e_{n-(m-2)}}}+\ldots + |a^{(m)}_{n}|_\infty \frac{|h_{n}|_\infty}{p^{e_{n+1}-e_{n}}}\\
&\leq & \frac{|h_{n-m}|_\infty }{p^{m+1}}+ |a^{(1)}_{n-(m-1)}|_\infty \frac{|h_{n-(m-1)}|_\infty }{p^{m}}+|a^{(2)}_{n-(m-2)}|_\infty \frac{|h_{n-(m-2)}|_\infty} {p^{m-1}}+\ldots + |a^{(m)}_{n}|_\infty \frac{|h_{n}|_\infty}{p}\\
& < & \frac{|h_{n-m}|_\infty }{p^{m+1}}+  \frac{|h_{n-(m-1)}|_\infty }{2p^{m-1}}+ |\frac{|h_{n-(m-2)}|_\infty} {2p^{m-2}}+\ldots +  \frac{|h_{n}|_\infty}{2}\\
&\leq & \max_{n-m\leq j\leq n}\{h_j\}\left ( \frac 1{p^{m+1}}+\frac 1 2\sum_{k=0}^{m-1} \frac 1 {p^k}\right )\\
&<& \max_{n-m\leq j\leq n}\{h_j\}.\end{eqnarray*}
The last inequality follows from the fact that 
\begin{equation*}
\frac 1{p^{m+1}}+\frac 1 2\sum_{k=0}^{m-1} \frac 1 {p^k}= \frac 1{p^{m+1}}+\frac 1 2 \cdot \frac {1-\frac 1 {p^m}}{1-\frac 1 p} < 1
\end{equation*}
Thus, for $n$ sufficiently large, it must be $h_n=0$, i.e., the algorithm terminates in a finite number of steps.
\end{proof}

\subsection{Examples}

We conclude providing some examples concerning the application of the $p$--adic Jacobi--Perron algorithm.

\noindent Taking $m=2$, in $\mathbb Q_5$, equations \eqref{eq:alg}, applied to the couple of rational numbers $\left( \frac{23}{5}, \frac{14}{19} \right)$ and $\left( \frac{7}{3}, \frac{11}{20} \right)$, provide the following expansions in MCFs:

$$\left( \frac{23}{5}, \frac{14}{19} \right) = \left[ \left( -\frac{2}{5}, \frac{6}{5}, \frac{6}{5}, \frac{4}{5} \right), \left( 1, 1, -1, -1 \right) \right]$$

$$\left( \frac{7}{3}, \frac{11}{20} \right) = \left[ \left( -1, -\frac{4}{5}, -\frac{3}{5} \right), \left( \frac{9}{5}, -1, 0 \right) \right].$$

\noindent If we consider the couple $\left(\sqrt[3]{2}, \frac{5}{4}\right)$, the $p$--adic Jacobi Perron algorithm provides the following finite MCF:

$$\left[\left(1, \frac{4}{5}, \frac{12}{5}\right),\left(0,1,0\right)\right]$$

\noindent that does not converge to the inputs, but it converges to the couple of rationals $\left(\frac{133}{48}, \frac{5}{4}\right)$.

\noindent If we consider two cubic irrationals $\alpha$ and $\beta$, where $\alpha$ is the root largest in modulo of the polynomial $x^3 - \frac{8}{5} x^2 - x - 1$ and $\beta = 1 + \frac{1}{\alpha}$, then the $p$--adic Jacobi--Perron algorithm provides a periodic expansion in MCF:

$$(\alpha, \beta) = \left[ \left( \overline{\frac{8}{5}} \right), \left( \overline{1} \right) \right].$$

\noindent In $\mathbb Q_7$, we can get the following expansions:

$$\left( \frac{31}{16}, \frac{123}{7} \right) = \left[ \left(-2, -\frac{16}{7}, \frac{13}{7}, \frac{17}{7}, \frac{2}{7} \right), \left( -\frac{24}{7}, -2, 2, -2, -1 \right) \right]$$

$$(\gamma, \delta) = \left[ \left( \overline{-\frac{3}{7}} \right), \left( \overline{-2} \right) \right],$$

\noindent where $\gamma$ is the root largest in modulo of $x^3 + \frac{3}{7} x^2 + 2x - 1$ and $\delta = -2 + \frac{1}{\gamma}$.

\noindent Taking $m = 3$, in $\mathbb Q_{11}$, we can get the following expansions:

$$\left( -\frac{5}{4}, \frac{29}{11}, \frac{3}{4} \right) = \left[ \left(-4, \frac{4}{11} \right), \left( \frac{29}{11}, 1 \right), \left( -2, 0  \right) \right]$$

$$\left(-\frac{7}{4}, \frac{2}{5}, \frac{1}{3} \right) = \left[ \left(1, -\frac{3}{11}, -\frac{5}{11}, \frac{4}{11} \right), \left( -4, -2, 0, 0 \right), \left( 4, 1, -4, 0 \right) \right].$$

\end{document}